\documentclass[leqno,12pt]{amsart}
\usepackage{stmaryrd} 
\setlength{\textheight}{23cm}
\setlength{\textwidth}{16cm}
\setlength{\oddsidemargin}{0cm}
\setlength{\evensidemargin}{0cm}
\setlength{\topmargin}{0cm}
\usepackage{amssymb}
%
%
%
\theoremstyle{plain} 
\newtheorem{theorem}{\indent\sc Theorem}[section]
\newtheorem{lemma}[theorem]{\indent\sc Lemma}

\theoremstyle{definition} 

\newtheorem{remark}[theorem]{\indent\sc Remark}

%

%


\begin{document}

\title[ Convective Cahn-Hilliard equation]{Global well-posedness and decay of solutions to the Cauchy problem of   convective Cahn-Hilliard equation }%

\author[X. Zhao]{Xiaopeng Zhao$^\dagger$
} 

\subjclass[2010]{ 
 35Q35, 35B65, 76N10, 76D05.}

\keywords{
Global Well-posedness, decay rate, convective Cahn-Hilliard equation, pure energy method.}


\address{$^\dagger$
College of Sciences,
Northeastern University,
Shenyang 110819,,
P. R. China\endgraf
}
\email{zhaoxiaopeng@mail.neu.edu.cn}


\begin{abstract}
In this paper, we consider the global well-posedness and time-decay rates of solution to the Cauchy problem for 3D convective Cahn-Hilliard equation with double-well potential  via a refined pure energy method.  In particular, the optimal decay rates of the higher-order spatial derivatives of the solution are obtained, the $\dot{H}^{-s}$ ($0<s\leq\frac12$)   negative Sobolev norms is shown to be preserved along time evolution and enhance the decay rates.

\end{abstract}\maketitle{\small
\section{Introduction} \label{sect1}

The convective Cahn-Hilliard equation \cite{EJMP,Eur,Wa,Gol}
 \begin{equation} \label{1-0} \partial_tu+\Delta^2u=\Delta\varphi(u)+\vec{\beta}\cdot\nabla  \psi(u),
 \end{equation}
 arises
naturally as a continuous model for the formation of facets and
corners in crystal growth. In equation (\ref{1-0}), $u(x,t)$ denotes the
slope of the interface \cite{Gol}, the convective term
$\vec{\beta}\cdot\nabla  \psi(u)$  stems
from the effect of kinetic that provides an independent flux of the
order parameter, similar to the effect of an external field in
spinodal decomposition of a driven system \cite{Gol}, $\varphi(u)$ stands for the derivative of  a configuration potential $\Phi(u)=\int_0^u\varphi(s)ds$, respectively.   Usually, we take $\varphi(u)$ as the derivation of a double-well potential
\begin{equation}
\nonumber
\varphi(s)=\Phi'(s)=s(s^2-1),\quad \Phi(s)=\frac14(s^2-1)^2,
\end{equation}
or   a singular potential (see\cite{Fri,Gal})
$$
\varphi_{\log}(s)=-\kappa_0s+\kappa_1\ln\frac{1+s}{1-s},\quad0<\kappa_1<\kappa_0.$$
 For small driving force $\vec{\beta}\rightarrow0$, equation (\ref{1-0}) is   reduced to the well-known classical Cahn-Hilliard
equation \cite{CH1,CH2,CH3,CH4,CH5,Sch}.

 A large amount of literature has been produced about the convective Cahn-Hilliard equation  in a bounded domain, subject to suitable boundary conditions. For example,  Zaks et al. \cite{Zaks}
investigated the bifurcations of stations periodic solutions of a
convective Cahn-Hilliard equation; Eden and Kalantarov \cite{Eden1,Eden2} established
some results on the existence of a compact attractor for the
convective Cahn-Hilliard equation with periodic boundary conditions
in one space dimension and three space dimension; Della Porta and Grasselli \cite{Della} considered the initial-boundary value problem of convective nonlocal Cahn-Hilliard equation as dynamical systems and showed that they have bounded absorbing sets and global attractors; Zhao and Liu  \cite{ZhaoAA,Zhaox} investigated the existence of optimal solutions and optimality condition for  thei initial-boundary value problem of convective Cahn-Hilliard equation;  Rocca and Sprekels \cite{RS} studied a distributed control problem for a 3D convective nonlocal Cahn-Hilliard-type  system involving a degenerate mobility and a singular potential. In \cite{L1,LY}, Liu et. al. considered properties of solutions for the initial-boundary value problem of the convective Cahn-Hilliard equation with nonconstant mobility and degenerate mobility.  
\begin{remark}
The Cahn-Hilliard equation
\begin{equation} \label{CH}
       \partial_tu= \Delta[-\gamma\Delta u+\varphi(u)],
                          \end{equation}    was used to describe  phase transition problems in binary metallic alloys \cite{CH2}, the representation of the tumor growth process \cite{Al}, color image inpainting \cite{Col} and other phenomenons.
 The convective Cahn-Hilliard equation can be seen as a modification of equation (\ref{CH}).  

\end{remark}

The Cauchy problem of the convective Cahn-Hilliard   in $\mathbb{R}^N$ ($N\in\mathbb{Z}^+$) has the following form:
 \begin{equation} \label{1-1}
\left\{ \begin{aligned}
         &\partial_tu+\Delta^2u=\Delta\varphi(u)+\gamma\nabla\cdot\psi(u), \quad (x,t)\in\mathbb{R}^N\times(0,\infty),\\
                 &u(x,0)=u_0(x),
                          \end{aligned} \right.
                          \end{equation} where $\gamma>0$ is a positive constant. In \cite{AMPA}, assuming that the  initial data $u_0(x)$ satisfies  $u_0(x) \in L^{\frac{N(l-1)}3}(\mathbb{R}^N)\bigcap L^{\infty}(\mathbb{R}^N)$   and $\|u_0(x) \|_{ L^{\frac{N(l-1)}3}}$ is sufficiently small, and the  nonlinear functions   $\varphi(u)=O(1)|u |^{p}$ and $\psi(u)=O(1)|u |^l$ as $u\rightarrow 0$, where $p=\frac{2l+1}3$,
 the author proved that there exists a unique global  smooth solution $u \in L^{\infty}\left(0,\infty;L^{\frac{N(l-1)}3}(\mathbb{R}^N)\right)$ for   problem (\ref{1-1}). Moreover, Liu and Liu \cite{LL} studied the Cauchy problem of the degenerate convective Cahn-Hilliard qeuation
 \begin{equation} \label{1-2}
\left\{ \begin{aligned}
         &\partial_tu+\Delta^2_{x'}u=\Delta_{x'}\varphi(u)-\vec{r}\cdot\nabla\psi(u) , \quad x'\in\mathbb{R}^{N-1}\times(0,\infty),\\
                 &u(x,0)=u_0(x),
                          \end{aligned} \right.
                          \end{equation}
where $\Delta_{x'}=\sum_{i=2}^n\partial_{x_i}^2$ denotes the $x'$ direction Laplacian operator with respect to $x'=(x_2,x_3,\cdots,x_n)$,  $\varphi(u)=O(|u|^{\theta+1})$ and $\psi(u)=O(|u|^{\theta+1})$ with the same growth property and $\theta\geq1$ is an integer. By using  the long-short wave method and the frequency decomposition method, the authors proved the existence of the unique global classical solution with small initial data and discussed the decay estimates.

\begin{remark}
There are also some papers studied the global well-posedness of solutions for  Cauchy problem of the Cahn-Hilliard equation (see e.g.,  Bricmont, Kupiainen and Taskinen \cite{10}, Caffarelli and Muler\cite{Caffarelli}, Liu, Wang and   Zhao\cite{ZHJ}, Cholewa and Rodriguez-Bernal \cite{Cholewa1}, Duan and Zhao \cite{DZ} and the reference cited therein).
\end{remark}

It is worth pointing out that the assumptions imposed on the nonlinear functions $\varphi(u)$ and $\psi(u)$ in \cite{AMPA,LL,10,Caffarelli,ZHJ,Cholewa1,DZ} are too strict. One of the most nature assumption on the nonlinear function $\varphi(u)$ is $\varphi(u)=u^3-u$, which is a double-well potential (the other is logarithmic potential). Moreover, we assume that $\psi(u)=\frac12u^2$, which can be found in \cite{Eur,Eden1,Eden2,Kor} and the reference therein. Thus a natural question is how to prove that the  Cauchy problem (\ref{1-1}) with $\varphi(u)=u^3-u$ and $\psi(u)=\frac12u^2$
admits a unique global smooth solution $u(x,t )$ and how to get the optimal temporal decay estimates? The main purpose of
our present paper is devoted to the above problems.   That is, we will consider the global existence and   decay rate of solutions for the Cauchy problem of convective Cahn-Hilliard equation in $\mathbb{R}^3$:
 \begin{equation} \label{1-3}
\left\{ \begin{aligned}
         &\partial_tu+\Delta^2u=\Delta(u^3-u)+u\cdot\nabla u,   \quad x\in\mathbb{R}^3,~t>0,\\
                 &u(\cdot,0)=u_0(\cdot),   \quad x\in\mathbb{R}^3,
                          \end{aligned} \right.
                          \end{equation}
which is equivalent to the following form:
 \begin{equation} \label{3-0}
\left\{ \begin{aligned}
         &\partial_tu+\Delta^2u-\Delta u=\Delta(u^3-2u)+u\cdot\nabla u,   \quad x\in\mathbb{R}^3,~t>0,\\
                 &u(\cdot,0)=u_0(\cdot),   \quad x\in\mathbb{R}^3.
                          \end{aligned} \right.
                          \end{equation}
\begin{remark}In this paper, $\nabla^l$ with an integral $l\geq0$ stands for the usual spatial derivatives of order $l$. If $l<0$ or $l$ is not a positive integer, $\nabla^l$ stands for $\Lambda^l$. We also use $\dot{H}^s(\mathbb{R}^3)$ ($s\in\mathbb{R}$) to denote the homoegneous Sobolev spaces on $\mathbb{R}^3$ with the norm $\|\cdot\|_{H^s}$ defined by $\|f\|_{H^s}:=\|\Lambda^sf\|_{L^2}$, and we use $H^s(\mathbb{R}^3)$ and $L^p(\mathbb{R}^3)$ ($1\leq p\leq\infty$) to describe the usual Sobolev spaces with the norm $\|\cdot\|_{H^s}$ and the usual $L^p$ space with the norm $\|\cdot\|_{L^p}$.
\end{remark}

First of all, by using Banach fixed point theorem, we consider the local well-posedness of solutions to the Cauchy problem (\ref{3-0}) in $\mathbb{R}^3$. More precisely, we prove
the following theorem:
\begin{theorem}[Local well-posedness]
\label{thm1.0}
Suppose that $u_0\in H^2(\mathbb{R}^3) $. Then, there exists a small time $T>0$ and a unique strong solution $u(x,t)$ to system (\ref{3-0}) satisfying
\begin{equation} \label{local-1}
 u\in L^{\infty}([0,\tilde{T}];H^2)\bigcap L^2(0,\tilde{T};H^4).
                          \end{equation}
\end{theorem}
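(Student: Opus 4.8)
The plan is to reformulate (\ref{3-0}) via Duhamel's principle and to close a contraction mapping argument in the space $X_{T}:=C([0,T];H^{2}(\mathbb{R}^{3}))\cap L^{2}(0,T;H^{4}(\mathbb{R}^{3}))$ equipped with
\[
\|u\|_{X_{T}}:=\sup_{0\le t\le T}\|u(t)\|_{H^{2}}+\Big(\int_{0}^{T}\|u(t)\|_{H^{4}}^{2}\,dt\Big)^{1/2}.
\]
Writing $L:=\Delta^{2}-\Delta$, which is nonnegative and self-adjoint on $L^{2}(\mathbb{R}^{3})$ with Fourier symbol $|\xi|^{4}+|\xi|^{2}$, and letting $S(t):=e^{-tL}$ be the associated analytic semigroup, equation (\ref{3-0}) is equivalent to the integral equation
\[
u(t)=\mathcal{T}(u)(t):=S(t)u_{0}+\int_{0}^{t}S(t-\tau)\,\mathcal{N}(u(\tau))\,d\tau,\qquad \mathcal{N}(u):=\Delta(u^{3})-2\Delta u+u\cdot\nabla u .
\]
I would run the argument on a closed ball $B_{R}:=\{u\in X_{\tilde T}:\|u\|_{X_{\tilde T}}\le R\}$ with $R$ and $\tilde T$ to be fixed below.

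The first step is the linear layer. Since the symbol of $S(t)$ does not exceed $1$ we have $\|S(t)u_{0}\|_{H^{2}}\le\|u_{0}\|_{H^{2}}$; and by Plancherel's theorem, together with $\int_{0}^{\infty}e^{-2t(|\xi|^{4}+|\xi|^{2})}\,dt=\tfrac{1}{2(|\xi|^{4}+|\xi|^{2})}$ and the elementary bound $\tfrac{|\xi|^{8}}{|\xi|^{4}+|\xi|^{2}}\le|\xi|^{4}$, one gets $\int_{0}^{\infty}\|S(t)u_{0}\|_{\dot{H}^{4}}^{2}\,dt\le\tfrac12\|u_{0}\|_{\dot{H}^{2}}^{2}$, hence $\|S(\cdot)u_{0}\|_{X_{T}}\le C_{0}(1+T^{1/2})\|u_{0}\|_{H^{2}}$. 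For the Duhamel term one needs the corresponding maximal $L^{2}$-in-time regularity for $L$: if $\partial_{t}v+Lv=f$ with $v(0)=0$, then the scalar bound $\big\|(|\xi|^{4}+|\xi|^{2})\widehat{v}(\cdot,\xi)\big\|_{L^{2}_{t}}\le\|\widehat{f}(\cdot,\xi)\|_{L^{2}_{t}}$, uniform in $\xi$ (a consequence of Young's convolution inequality in time, since $\int_{0}^{\infty}m e^{-sm}\,ds=1$), combined with separate treatment of $|\xi|\le1$ and $|\xi|\ge1$, yields $\|v\|_{L^{2}(0,T;\dot{H}^{4})}\le C\|f\|_{L^{2}(0,T;L^{2})}$, and the trace embedding $H^{1}(0,T;L^{2})\cap L^{2}(0,T;H^{4})\hookrightarrow C([0,T];H^{2})$ upgrades this to $\big\|\int_{0}^{\cdot}S(\cdot-\tau)f(\tau)\,d\tau\big\|_{X_{T}}\le C(1+T^{1/2})\|f\|_{L^{2}(0,T;L^{2})}$.

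The second step is to control $\mathcal{N}$ in $L^{2}(0,T;L^{2})$. Because $H^{2}(\mathbb{R}^{3})$ is a Banach algebra and $H^{2}(\mathbb{R}^{3})\hookrightarrow L^{\infty}(\mathbb{R}^{3})$ in dimension three, for each fixed $t$ one has $\|\Delta(u^{3})\|_{L^{2}}\le\|u^{3}\|_{H^{2}}\le C\|u\|_{H^{2}}^{3}$, $\|\Delta u\|_{L^{2}}\le\|u\|_{H^{2}}$, and $\|u\cdot\nabla u\|_{L^{2}}\le\|u\|_{L^{\infty}}\|\nabla u\|_{L^{2}}\le C\|u\|_{H^{2}}^{2}$; integrating in time gives $\|\mathcal{N}(u)\|_{L^{2}(0,T;L^{2})}\le C\,T^{1/2}\big(\|u\|_{X_{T}}^{3}+\|u\|_{X_{T}}^{2}+\|u\|_{X_{T}}\big)$. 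Combining with the linear bounds, $\|\mathcal{T}(u)\|_{X_{\tilde T}}\le C_{0}(1+\tilde T^{1/2})\|u_{0}\|_{H^{2}}+C(1+\tilde T^{1/2})\tilde T^{1/2}\big(\|u\|_{X_{\tilde T}}^{3}+\|u\|_{X_{\tilde T}}^{2}+\|u\|_{X_{\tilde T}}\big)$, so taking $R:=4C_{0}\|u_{0}\|_{H^{2}}$ and then $\tilde T\le1$ small enough that $C(R^{2}+R+1)\tilde T^{1/2}\le\tfrac12$ makes $\mathcal{T}$ map $B_{R}$ into itself. For the contraction I would write $u^{3}-v^{3}=(u-v)(u^{2}+uv+v^{2})$ and $u\cdot\nabla u-v\cdot\nabla v=u\cdot\nabla(u-v)+(u-v)\cdot\nabla v$ and invoke the same algebra and embedding properties to obtain $\|\mathcal{N}(u)-\mathcal{N}(v)\|_{L^{2}(0,T;L^{2})}\le C\,T^{1/2}(R^{2}+R+1)\|u-v\|_{X_{T}}$, whence (after possibly shrinking $\tilde T$) $\|\mathcal{T}(u)-\mathcal{T}(v)\|_{X_{\tilde T}}\le\tfrac12\|u-v\|_{X_{\tilde T}}$. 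Banach's fixed point theorem then produces a unique $u\in B_{R}$ with $u=\mathcal{T}(u)$; strong continuity of $S(t)$ on $H^{2}$ and the trace property above give $u\in C([0,\tilde T];H^{2})$, and from $\partial_{t}u=-\Delta^{2}u+\Delta u+\mathcal{N}(u)\in L^{2}(0,\tilde T;L^{2})$ we conclude that $u$ is a strong solution of (\ref{3-0}). Uniqueness within all of $X_{\tilde T}$ (not merely in $B_{R}$) follows by applying the same contraction estimate on successive short subintervals, equivalently by a Gronwall estimate on the difference of two solutions.

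The part that requires genuine care is the linear estimate, because $L=\Delta^{2}-\Delta$ is \emph{not} comparable to $(1-\Delta)^{2}$ near zero frequency, so the smoothing and maximal-regularity bounds must be proved with a real low-frequency/high-frequency split rather than quoted from a generic analytic-semigroup statement; moreover one has to verify that the resulting gain of exactly four spatial derivatives in $L^{2}_{t}L^{2}_{x}$ precisely compensates the two derivatives lost in $\Delta(u^{3})$ and the one lost in $u\cdot\nabla u$, while still leaving a positive power $\tilde T^{1/2}$ to close both self-mapping and contraction. The nonlinear estimates themselves are routine once the $H^{2}(\mathbb{R}^{3})$ Banach-algebra and $H^{2}\hookrightarrow L^{\infty}$ properties are used, which is exactly where the restriction to three space dimensions enters.
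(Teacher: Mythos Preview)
Your proof is correct but proceeds along a genuinely different route from the paper. The paper also invokes the Banach fixed point theorem, but instead of the mild formulation it \emph{linearizes} the equation: given $\tilde u$ in a ball of $C([0,T];H^{2})$, it solves the linear problem
\[
\partial_{t}u+\Delta^{2}u-\Delta u=\Delta\big[(\tilde u-\sqrt{2})(\tilde u+\sqrt{2})\,u\big]+\tilde u\cdot\nabla u,\qquad u(0)=u_{0},
\]
and defines the map $F:\tilde u\mapsto u$. The self-mapping property is obtained by direct energy estimates (testing against $u$ and against $\Delta^{2}u$), while contraction is proved only in the \emph{weaker} norm $L^{2}(0,T;H^{2})$; weak compactness is then invoked to recover the full regularity. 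By contrast, you write the solution in Duhamel form, prove maximal $L^{2}$-regularity for the semigroup $e^{-t(\Delta^{2}-\Delta)}$ via a Fourier-side Young inequality, and close the contraction directly in the full norm $C_{t}H^{2}\cap L^{2}_{t}H^{4}$, exploiting that $H^{2}(\mathbb{R}^{3})$ is a Banach algebra to bound $\Delta(u^{3})$ pointwise in time.

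What each approach buys: your semigroup argument is cleaner in that it avoids the auxiliary linear PDE, the weak-compactness step, and the two-norm contraction device; it also yields the $L^{2}_{t}H^{4}$ regularity as an automatic byproduct of maximal regularity rather than as a separate a posteriori estimate. The paper's energy-method route, on the other hand, requires no semigroup machinery or frequency splitting and dovetails with the a priori estimates used later for global existence, so the local and global arguments share the same toolkit. Your remark that the low-frequency behavior of $L=\Delta^{2}-\Delta$ forces a genuine frequency split (since $L$ is not uniformly elliptic of order four near $\xi=0$) is well taken and is exactly the subtlety absent from the paper's purely energy-based treatment.
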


The second purpose of this paper is to prove some global well-posedness results for the Cauchy problem (\ref{3-0}) in $\mathbb{R}^3$. For $N\geq 1$, define
$$
\mathcal{E}_N(t)=\sum_{l=0}^N\|\nabla^lu\|_{L^2}^2,
$$
and the corresponding dissipation rate with minimum derivative counts by
$$
\mathcal{D}_N(t)=\sum_{l=0}^N(\|\nabla^{l+1}u\|_{L^2}^2+\|\nabla^{l+2}u\|_{L^2}^2).
$$
Our   result on the global well-posedness of solutions of Cauchy problem (\ref{1-3}) is stated in the following theorem.
\begin{theorem}
\label{thm1.1}
Let $N\geq 1$, suppose that the initial data $u_0\in H^N(\mathbb{R}^3)$, and there exists a constant   $\delta_0>0$ such that if \begin{equation}\label{bound}\mathcal{E}_1(0)\leq\delta_0,
\end{equation}
then there exists a unique global solution $u(x,t)$ satisfying that for all $t\geq0$,
\begin{equation}
\label{1-5}
\sup_{0\leq t\leq\infty}\mathcal{E}_1(t)+\int_0^{\infty}\mathcal{D}_1(s)ds\leq C\mathcal{E}_1(0).
\end{equation}
Moreover, if $N\geq 2$, then for all $t>0$, the following inequality holds:
\begin{equation}
\label{1-6}
\sup_{0\leq t\leq\infty}\mathcal{E}_N(t)+\int_0^{\infty}\mathcal{D}_N(s)ds\leq C\mathcal{E}_N(0).
\end{equation}
\end{theorem}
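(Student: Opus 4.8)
The plan is to establish \eqref{1-5} and \eqref{1-6} by a continuity argument built on a priori energy estimates for the reformulated system \eqref{3-0}, combined with the local existence from Theorem \ref{thm1.0}. First I would set up the bootstrap: assume that on a maximal time interval the smallness hypothesis propagates, say $\sup_t \mathcal{E}_1(t) \leq \varepsilon$ for a small $\varepsilon$ to be fixed, and show that the energy inequalities actually improve this bound, so the solution extends globally. The key is to work with the form \eqref{3-0}, where the linear part $\partial_t u + \Delta^2 u - \Delta u$ has a genuinely dissipative structure: testing against $u$ produces, after integration by parts, the dissipation $\|\nabla u\|_{L^2}^2 + \|\Delta u\|_{L^2}^2$, which is exactly $\mathcal{D}_0$, while testing $\nabla^l$ of the equation against $\nabla^l u$ gives the higher pieces $\|\nabla^{l+1}u\|_{L^2}^2 + \|\nabla^{l+2}u\|_{L^2}^2$ summing to $\mathcal{D}_N$.

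The core computation is the nonlinear estimate. Applying $\nabla^l$ to \eqref{3-0}, pairing with $\nabla^l u$ in $L^2$, and summing over $0 \leq l \leq N$, I would obtain
\begin{equation}
\nonumber
\frac{1}{2}\frac{d}{dt}\mathcal{E}_N(t) + \mathcal{D}_N(t) \leq \left| \sum_{l=0}^N \int \nabla^l \Delta(u^3 - 2u)\,\nabla^l u\,dx \right| + \left| \sum_{l=0}^N \int \nabla^l(u\cdot\nabla u)\,\nabla^l u\,dx \right|.
\end{equation}
For the term coming from $\Delta(u^3)$, integrate by parts twice to move two derivatives onto $\nabla^l u$, producing $\int \nabla^{l-1}(u^3)\cdot\nabla^{l+1}\Delta u$ type expressions (or $\nabla^l(u^3)\cdot\nabla^{l+2}u$), then use the Leibniz rule, Gagliardo--Nirenberg interpolation, and the Sobolev embedding $H^2(\mathbb{R}^3)\hookrightarrow L^\infty$ to bound $\|\nabla^k(u^3)\|_{L^2}$ by $\|u\|_{H^2}^2\,\|\nabla^{k}u\|_{L^2}$-type quantities, so that the whole term is controlled by $\sqrt{\mathcal{E}_1}\cdot\mathcal{D}_N$ (up to lower-order $\mathcal{D}_N$ pieces that are absorbed). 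The linear $\Delta(-2u)$ term contributes $2\int \nabla^{l+1}u\,\nabla^{l+1}u$, which is of order $\mathcal{D}_N$ itself but with a fixed harmless constant—here one must be slightly careful, and it may be cleaner to keep $-\Delta u$ on the left and treat the right side as $\Delta(u^3) - \Delta u$, so that the $-\Delta u$ reinforces rather than competes with the dissipation. For the convective term $u\cdot\nabla u = \frac12\nabla(u^2)$, integrating by parts once gives $\int \nabla^{l-1}(u^2)\cdot$(second derivative of $\nabla^l u$)$\,dx$ or similar, again estimated by $\sqrt{\mathcal{E}_1}\,\mathcal{D}_N$ via the product estimates. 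Collecting everything,
\begin{equation}
\nonumber
\frac{d}{dt}\mathcal{E}_N(t) + \mathcal{D}_N(t) \leq C\sqrt{\mathcal{E}_1(t)}\,\mathcal{D}_N(t),
\end{equation}
so once $\mathcal{E}_1$ is small enough that $C\sqrt{\mathcal{E}_1} \leq \tfrac12$, the dissipation term absorbs the right side and $\frac{d}{dt}\mathcal{E}_N + \tfrac12\mathcal{D}_N \leq 0$. Integrating in time yields \eqref{1-6}, and specializing to $N=1$ yields \eqref{1-5}; the smallness needed on the right is only $\mathcal{E}_1(0)$ because the closing smallness is measured in $\mathcal{E}_1$, which is itself controlled by $\mathcal{E}_1(0)$ via the $N=1$ estimate—so the argument must be run first for $N=1$ to secure the smallness of $\mathcal{E}_1(t)$ for all time, and then for general $N$ using that bound.

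The main obstacle I anticipate is handling the convective term $u\cdot\nabla u$ at the top order $l = N$: naively it loses a full derivative relative to the biharmonic dissipation in the sense that one needs to place derivatives carefully so that no factor requires more than $\mathcal{D}_N^{1/2}$, and the cubic term $\Delta(u^3)$ at top order has the same feature. The resolution is the usual one—integrate by parts to balance derivatives, split the Leibniz expansion so that the factor carrying the most derivatives is estimated in $L^2$ by a piece of $\mathcal{D}_N$ and the remaining factors in $L^\infty$ or $L^6$ by pieces of $\mathcal{E}_1$ through Gagliardo--Nirenberg—but verifying that every term genuinely closes (in particular that no borderline term forces a logarithm or an uncontrolled norm) is where the real work lies. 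The rigorous justification of the formal energy identities (i.e. that the strong solution from Theorem \ref{thm1.0} has enough regularity, or can be approximated so that all integrations by parts are legitimate and the continuation criterion applies) is routine but should be acknowledged.
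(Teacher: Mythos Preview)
Your proposal follows the same architecture as the paper: derive a differential inequality $\frac{d}{dt}\mathcal{E}_N + \mathcal{D}_N \lesssim (\sqrt{\mathcal{E}_1}+\mathcal{E}_1)\mathcal{D}_N$ by testing $\nabla^l$ of \eqref{3-0} against $\nabla^l u$ for each $l$, then close by a continuation argument first at $N=1$ and afterwards use the resulting global smallness of $\mathcal{E}_1$ to close general $N$. This two-step bootstrap and the structure of the estimate match Lemma~\ref{lem3.1} and the paper's proof exactly.

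The one place where your outline diverges, and where there is a genuine gap, is the treatment of the linear piece $-2u$ inside $\Delta(u^3-2u)$. You correctly observe that after testing, $\int\nabla^l\Delta(-2u)\,\nabla^lu\,dx=2\|\nabla^{l+1}u\|_{L^2}^2$, a \emph{fixed} (not small) multiple of a dissipation term; it overwhelms the single $\|\nabla^{l+1}u\|_{L^2}^2$ produced by $-\Delta u$ on the left of \eqref{3-0}. Your suggested remedy of reshuffling $\Delta u$ between the two sides cannot help: whichever side carries the bad-sign Laplacian, a term $\|\nabla^{l+1}u\|_{L^2}^2$ with the wrong sign survives, and no smallness of $\mathcal{E}_1$ absorbs it. The paper's device is to \emph{never split}: it writes $u^3-2u=u(u-\sqrt{2})(u+\sqrt{2})$ and treats the whole expression as a trilinear product, distributing the $\nabla^l$ by Leibniz and placing each of the three factors in $L^6$ via $\|v\|_{L^6}\lesssim\|\nabla v\|_{L^2}$. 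This yields (see \eqref{3-4}) a bound of the form $\|\nabla u\|_{L^2}^2\bigl(\|\nabla^{l+1}u\|_{L^2}^2+\|\nabla^{l+2}u\|_{L^2}^2\bigr)$, i.e.\ a factor $\mathcal{E}_1$ rather than a fixed constant in front of the dissipation, and that is precisely what makes the absorption work. Without this factorization (or an equivalent way of rendering the full nonlinearity $u^3-2u$ effectively superlinear in the estimate), the inequality $\frac{d}{dt}\mathcal{E}_N + \mathcal{D}_N \leq C\sqrt{\mathcal{E}_1}\,\mathcal{D}_N$ you aim for cannot be obtained, and the bootstrap does not close.
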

 
 The temporal decay rate of solutions is also an interesting topic in the study of  dissipative equations. One of the main tools to study the temporal decay rate is
Fourier splitting method, which was introduced by Schonbek in \cite{S1,S2}. Laterly, this method was well extended to investigate the decay for the solutions of PDE from mathematical physics. In \cite{ZHJ}, by using Fourier splitting method, Liu, Wang and Zhao studied the temporal decay rate of the solution, and its derivatives for the Cauchy problem of Cahn-Hilliard equation  with $\varphi(u)=O(|u|^p)$ for some $p>0$. In this paper, we   improve Liu, Wang and Zhao's results, assume that $\varphi(u)$ is a double-well potential,  study the decay rate of global solutions for problem (\ref{1-3}). More precisely, we establish the following result:
\begin{theorem}
\label{thm1.2}
Suppose that all assumptions in Theorem \ref{thm1.1} hold. Let $u(x,t)$ be the solution to the problem (\ref{1-3}) constructed in Theorem \ref{thm1.1}. Moreover, assume $u_0\in L^{p}(\mathbb{R}^3)$ $(\frac32\leq p\leq 2)$, then the following decay estimate holds:
\begin{equation}
\label{1-9}
\|\nabla^ku(t)\|_{H^{N-k}}\leq C(1+t)^{-\sigma_k},\quad\hbox{for}~k=0,1,\cdots, N-1,
\end{equation}where $$\sigma_k=\frac32\left(\frac1p -\frac12\right)+\frac k2 .
$$\end{theorem}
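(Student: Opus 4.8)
The plan is to run a pure energy argument of Guo--Wang type on the equivalent formulation~(\ref{3-0}): I combine the energy estimates underlying Theorem~\ref{thm1.1} with a time-uniform bound for a negative homogeneous Sobolev norm, and then close a scalar differential inequality by Sobolev interpolation. Set $s:=3\left(\tfrac1p-\tfrac12\right)$, so that $0<s\le\tfrac12$ since $\tfrac32\le p\le 2$; by the Hardy--Littlewood--Sobolev inequality $L^{p}(\mathbb{R}^3)\hookrightarrow\dot H^{-s}(\mathbb{R}^3)$, hence $u_0\in\dot H^{-s}$, and the target exponent is $\sigma_k=\tfrac12(k+s)$. Arguing exactly as in the proof of Theorem~\ref{thm1.1}, but now applying $\nabla^{l}$ for $k\le l\le N$ and summing from $l=k$ instead of from $l=0$, I first obtain, for every $0\le k\le N-1$, a closed differential inequality
\[ \frac{d}{dt}\,\mathcal{E}_{k,N}(t)+C_1\,\mathcal{D}_{k,N}(t)\le 0, \qquad \mathcal{E}_{k,N}:=\sum_{l=k}^{N}\|\nabla^{l}u\|_{L^2}^2, \quad \mathcal{D}_{k,N}:=\sum_{l=k}^{N}\!\left(\|\nabla^{l+1}u\|_{L^2}^2+\|\nabla^{l+2}u\|_{L^2}^2\right), \]
where the cubic and convective terms are absorbed by the dissipation because each carries a factor $\sqrt{\mathcal{E}_1(t)}$ (or $\mathcal{E}_2(t)$ when $N\ge2$), which is small by~(\ref{bound}) and bounded along the flow by Theorem~\ref{thm1.1}. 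In particular $\mathcal{E}_{k,N}$ is nonincreasing and $\|\nabla^k u(t)\|_{H^{N-k}}^2$ is comparable to $\mathcal{E}_{k,N}(t)$.

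The key a priori bound is $\sup_{t\ge0}\|\Lambda^{-s}u(t)\|_{L^2}\le C_0$. Applying $\Lambda^{-s}$ to~(\ref{3-0}) and pairing with $\Lambda^{-s}u$ in $L^2$, the linear part is treated as in Theorem~\ref{thm1.1} (using $\|\Lambda^{-s}\nabla u\|_{L^2}^2\le\varepsilon\|\Lambda^{-s}\Delta u\|_{L^2}^2+C_\varepsilon\|\Lambda^{-s}u\|_{L^2}^2$), while the nonlinear part reduces, after one integration by parts, to pairings of $\Lambda^{-s}\nabla u$ with $\Lambda^{-s}(u^2\nabla u)$ and with $\Lambda^{-s}(u^2)$. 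These I estimate by the Hardy--Littlewood--Sobolev inequality $\|\Lambda^{-s}f\|_{L^2}\le C\|f\|_{L^{q}}$ with $\tfrac1q=\tfrac12+\tfrac s3$, then H\"older's inequality, the $\mathbb{R}^3$ Sobolev embeddings (e.g. $\dot H^{1}\hookrightarrow L^{6}$ and $\dot H^{3/2-3/r}\hookrightarrow L^{r}$, $2\le r\le 6$), and the uniform bound on $\|\Lambda^{-s}u\|_{L^2}$ itself; the outcome is that the right-hand side is bounded by $C\left(\sqrt{\mathcal{E}_N(t)}+\mathcal{E}_N(t)\right)\mathcal{D}_N(t)$. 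Since $\mathcal{E}_N$ stays bounded, integrating in time and using $\int_0^\infty\mathcal{D}_N\,ds<\infty$ from~(\ref{1-6}) together with $u_0\in\dot H^{-s}$ yields the claimed bound.

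To close the estimates I use the interpolation inequality $\|\nabla^{l}u\|_{L^2}\le C\,\|\Lambda^{-s}u\|_{L^2}^{1/(l+1+s)}\,\|\nabla^{l+1}u\|_{L^2}^{(l+s)/(l+1+s)}$, which with the uniform negative-norm bound gives $\|\nabla^{l+1}u\|_{L^2}^2\ge C\,\|\nabla^{l}u\|_{L^2}^{2(l+1+s)/(l+s)}$. Since $\mathcal{E}_{k,N}$ is bounded and $\tfrac{l+1+s}{l+s}\le\tfrac{k+1+s}{k+s}$ for $l\ge k$, every summand of $\mathcal{D}_{k,N}$ is bounded below by $C\,\|\nabla^{l}u\|_{L^2}^{2\left(1+1/(k+s)\right)}$, so Jensen's inequality gives $\mathcal{D}_{k,N}\ge C\,\mathcal{E}_{k,N}^{\,1+1/(k+s)}$ and hence
\[ \frac{d}{dt}\,\mathcal{E}_{k,N}(t)+C_2\,\mathcal{E}_{k,N}(t)^{\,1+\frac{1}{k+s}}\le 0. \]
Integrating this scalar differential inequality yields $\mathcal{E}_{k,N}(t)\le C(1+t)^{-(k+s)}$, whence $\|\nabla^k u(t)\|_{H^{N-k}}\le C(1+t)^{-(k+s)/2}=C(1+t)^{-\sigma_k}$ for $k=0,1,\dots,N-1$, which is~(\ref{1-9}).

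The step I expect to be the main obstacle is the negative Sobolev estimate of the second paragraph: one has to check that the destabilizing lower-order linear term and, above all, the cubic nonlinearity really do keep $\|\Lambda^{-s}u(t)\|_{L^2}$ uniformly bounded. This is precisely where the restriction $0<s\le\tfrac12$ is used --- it makes the Hardy--Littlewood--Sobolev conjugate exponent $q=\left(\tfrac12+\tfrac s3\right)^{-1}$ compatible with the $\mathbb{R}^3$ Sobolev embeddings needed to place $u^2\nabla u$ and $u^2$ into $L^{q}$ in terms of norms already controlled by Theorem~\ref{thm1.1} --- while the smallness~(\ref{bound}) and the time-integrability of $\mathcal{D}_N$ are what make the time integrals of the nonlinear contributions converge. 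The remaining ingredients --- the level-$k$ energy inequalities and the integration of the scalar ODE --- are routine adaptations of the computations underlying Theorem~\ref{thm1.1} and of standard Sobolev interpolation.
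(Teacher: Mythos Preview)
Your outline follows the same Guo--Wang pure energy scheme as the paper: propagate a negative Sobolev norm uniformly in time, couple this to the level--$k$ energy inequality coming from Lemma~\ref{lem3.1}, interpolate to obtain a scalar ODE $\frac{d}{dt}\mathcal{E}+C\mathcal{E}^{1+1/(k+s)}\le 0$, and integrate. The paper carries this out with the two-level quantities $\mathcal{E}_k^{k+1}=\sum_{l=k}^{k+1}\|\nabla^{l}u\|_{L^2}^2$ rather than your $\mathcal{E}_{k,N}$, but that difference is purely cosmetic.

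The one substantive divergence is in the negative Sobolev step (your second paragraph versus the paper's Lemma~\ref{lem5.1}). The paper does \emph{not} split off a ``destabilizing lower-order linear term'': it keeps $u^3-2u=u(u+\sqrt{2})(u-\sqrt{2})$ as a triple product, pairs $\Lambda^{-s}\Delta[u(u+\sqrt{2})(u-\sqrt{2})]$ directly with $\Lambda^{-s}u$, and by the Hardy--Littlewood--Sobolev inequality and H\"older obtains
\[
\frac{d}{dt}\|u\|_{\dot H^{-s}}^2+\|\nabla^2 u\|_{\dot H^{-s}}^2+\|\nabla u\|_{\dot H^{-s}}^2\lesssim\|\nabla u\|_{H^1}^2\,\|u\|_{\dot H^{-s}},
\]
after which time-integration together with $\int_0^\infty\|\nabla u\|_{H^1}^2\,dt<\infty$ and a simple bootstrap gives the uniform bound $\sup_t\|u(t)\|_{\dot H^{-s}}\le C_0$. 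Your proposed route---separating the linear piece and invoking $\|\Lambda^{-s}\nabla u\|_{L^2}^2\le\varepsilon\|\Lambda^{-s}\Delta u\|_{L^2}^2+C_\varepsilon\|\Lambda^{-s}u\|_{L^2}^2$---would leave a term $C_\varepsilon\|\Lambda^{-s}u\|_{L^2}^2$ on the right-hand side, and Gronwall then produces only an exponential-in-$t$ bound, not the uniform one you need to close the interpolation. So the factorization device (already used in~(\ref{3-4}) for the positive-order estimates) is what makes this step go through in the paper; your sketch should adopt it here as well rather than treating the $-2u$ contribution as a standalone linear term.
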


The rest of this paper is organized as follows. First of all, in Section 2, we give some useful results and lemmas which will be used in our proofs. Then, in Section 3, we prove theorem \ref{thm1.0} on the local well-posedness of solutions for Cauchy problem (\ref{3-0}).   Section 4 is devoted to prove theorem \ref{thm1.1} on the small initial data global well-posedness of solutions. In Section 5, we derive the evolution of the negative Sobolev norms of the solution and establish the decay estimates of problem (\ref{3-0}).

\section{Preliminaries}
In this section, we introduce some helpful results in $\mathbb{R}^3$.

The following Gagliardo-Nirenberg inequality was proved in \cite{Nirenberg}.
\begin{lemma}[\cite{Nirenberg}]
\label{lem2.1}
Let $0\leq m,\alpha\leq l$, then we have
\begin{equation}\label{2-1}
\|\nabla^{\alpha}f\|_{L^p}\lesssim\|\nabla^mf\|_{L^q}^{1-\theta}\|\nabla^lf\|_{L^r}^{\theta},
\end{equation}
where $\theta\in[0,1]$ and $\alpha$ satisfies
\begin{equation}\label{2-2}
\frac{\alpha}3-\frac1p=\left(\frac m3-\frac1q\right)(1-\theta)+\left(\frac l3-\frac1r\right)\theta.
\end{equation}
Here, when $p=\infty$, we require that $0<\theta<1$.
\end{lemma}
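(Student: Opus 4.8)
The plan is to derive (\ref{2-1}) from the Littlewood--Paley characterization of $L^p$ norms together with Bernstein's inequalities, using the exponent relation (\ref{2-2}) as the algebraic identity that makes a high--low frequency balancing succeed. First I would observe that (\ref{2-2}), after multiplying by $3$, reads $\alpha-\frac3p=(m-\frac3q)(1-\theta)+(l-\frac3r)\theta$, which is precisely the statement that both sides of (\ref{2-1}) carry the same scaling dimension under $f\mapsto f(\lambda\,\cdot)$. Thus the inequality is scale invariant and its constant cannot depend on any length scale; this is the structural fact the rest of the argument exploits.

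Next I would fix a homogeneous Littlewood--Paley decomposition $f=\sum_{j\in\mathbb{Z}}\dot\Delta_j f$, where $\dot\Delta_j$ localizes the frequency to $|\xi|\sim 2^j$. On a single block Bernstein's inequalities give $\|\nabla^\alpha\dot\Delta_j f\|_{L^p}\sim 2^{j\alpha}\|\dot\Delta_j f\|_{L^p}$ and, for $s\le p$, $\|\dot\Delta_j f\|_{L^p}\lesssim 2^{3j(\frac1s-\frac1p)}\|\dot\Delta_j f\|_{L^s}$; combining these with the boundedness of $\dot\Delta_j$ on every $L^s$ (Young's inequality) yields the two per-block estimates
\[
\|\nabla^\alpha\dot\Delta_j f\|_{L^p}\lesssim 2^{ja}\|\nabla^m f\|_{L^q},\qquad
\|\nabla^\alpha\dot\Delta_j f\|_{L^p}\lesssim 2^{jb}\|\nabla^l f\|_{L^r},
\]
with $a:=\alpha-m+3(\frac1q-\frac1p)$ and $b:=\alpha-l+3(\frac1r-\frac1p)$. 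A direct computation shows that (\ref{2-2}) is equivalent to $\theta=a/(a-b)$; since $\theta\in[0,1]$ this forces $a$ and $b$ to carry opposite weak signs, and after possibly interchanging the two regimes we may take $a\ge 0\ge b$, with both signs strict exactly when $\theta\in(0,1)$. Hence the first bound decays as $j\to-\infty$ and the second as $j\to+\infty$.

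I would then split at a threshold $J$, writing $\nabla^\alpha f=\nabla^\alpha\sum_{j<J}\dot\Delta_j f+\nabla^\alpha\sum_{j\ge J}\dot\Delta_j f$, estimate the low-frequency sum by the first per-block bound and the high-frequency sum by the second, and use the triangle inequality together with the geometric summability furnished by $a>0$ and $b<0$ to obtain $\|\nabla^\alpha f\|_{L^p}\lesssim 2^{Ja}\|\nabla^m f\|_{L^q}+2^{Jb}\|\nabla^l f\|_{L^r}$. Optimizing over $J$ so that the two terms balance produces exactly $\|\nabla^\alpha f\|_{L^p}\lesssim\|\nabla^m f\|_{L^q}^{1-\theta}\|\nabla^l f\|_{L^r}^{\theta}$ with $\theta=a/(a-b)$ and $1-\theta=-b/(a-b)$, which is (\ref{2-2}).

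The hard part will be the borderline bookkeeping rather than the main mechanism. The geometric sums converge only when $a$ and $b$ are strictly signed, which is why the case $p=\infty$ must be excluded unless $0<\theta<1$: at $p=\infty$ with $\theta=0$ or $1$ one of the exponents vanishes and the dyadic sum diverges logarithmically, so the stated restriction is genuinely needed. I would also have to treat the degenerate endpoints ($p$, $q$, or $r$ equal to $1$ or $\infty$, and $\theta\in\{0,1\}$) separately, where the square-function equivalence fails and the Bernstein exponent-lowering step requires the correct ordering of the Lebesgue indices; there one falls back on the triangle inequality with Young's inequality for the projectors (valid on all $L^s$), or simply on the classical Sobolev embedding furnishing the $\theta=0$ and $\theta=1$ endpoints. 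Assembling these endpoint cases and confirming the sign analysis of $a,b$ under all admissible orderings of $m$ and $\alpha$ is the only delicate point; the interpolation itself is forced by the scaling identity (\ref{2-2}).
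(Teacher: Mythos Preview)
The paper does not prove this lemma at all: it is stated in the preliminaries section purely as a citation of Nirenberg's classical result, with no argument given. So there is no ``paper's own proof'' to compare against.

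Your Littlewood--Paley/Bernstein sketch is a legitimate and by now standard modern route to Gagliardo--Nirenberg type inequalities, quite different from Nirenberg's original 1959 argument (which proceeds via integral representations and iterated one-dimensional interpolation). The scaling check, the identification $\theta=a/(a-b)$, and the high--low splitting with optimization in $J$ are all correct and give the heart of the matter cleanly. Your approach has the advantage of being short and transparent about why the scaling relation (\ref{2-2}) is exactly what is needed; the classical argument, on the other hand, handles the full range of exponents (including $p,q,r$ outside $(1,\infty)$ and the orderings $q>p$ or $r>p$ where your Bernstein lowering step fails) in a more uniform way. You already flag this in your final paragraph, and that is indeed where the real work would lie: for parameter configurations with $q>p$ or $r>p$ the per-block estimate $\|\dot\Delta_j f\|_{L^p}\lesssim 2^{3j(1/s-1/p)}\|\dot\Delta_j f\|_{L^s}$ is unavailable, and one must either reverse the roles, pass through a Besov-space formulation, or fall back on the classical one-dimensional argument. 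For the purposes of this paper---where the lemma is quoted, not proved, and only invoked in standard configurations---your outline is more than adequate.
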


We also introduce the Hardy-Littlewood-Sobolev theorem, which implies the following $L^p$ type inequality.
\begin{lemma}[\cite{Stein,15}]
\label{lem2.3}
Let $0\leq s<\frac32$, $1<p\leq 2$ and $\frac12+\frac s3=\frac1p$, then
\begin{equation}
\label{2-4}
\|f\|_{\dot{H}^{-s}}\lesssim\|f\|_{L^p}.
\end{equation}
\end{lemma}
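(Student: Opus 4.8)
The plan is to recognize inequality~(\ref{2-4}) as the classical Hardy--Littlewood--Sobolev inequality expressed through the Riesz potential, and to prove it by a maximal-function argument. By Plancherel's theorem $\|f\|_{\dot H^{-s}}=\|\Lambda^{-s}f\|_{L^2}$, where $\Lambda^{-s}$ is the Fourier multiplier with symbol $|\xi|^{-s}$. The case $s=0$ (equivalently $p=2$) is trivial, so assume $0<s<\frac32$. Since the Fourier transform of $|x|^{-(3-s)}$ on $\mathbb{R}^3$ equals $c_s|\xi|^{-s}$ for $0<s<3$, we have the representation $\Lambda^{-s}f(x)=c_s\int_{\mathbb{R}^3}|x-y|^{-(3-s)}f(y)\,dy=:c_s\,I_sf(x)$; it therefore suffices to show $\|I_sf\|_{L^2}\lesssim\|f\|_{L^p}$ under the scaling relation $\frac12=\frac1p-\frac s3$, which is exactly the stated hypothesis.

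The core step is a pointwise interpolation bound obtained by splitting the kernel at an arbitrary radius $R>0$, writing $I_sf(x)=\int_{|y|<R}+\int_{|y|\ge R}$. For the near piece I would break $\{|y|<R\}$ into dyadic annuli $2^{-k-1}R\le|y|<2^{-k}R$; on each one $|y|^{-(3-s)}\approx(2^{-k}R)^{s-3}$ and $\int_{|y|<2^{-k}R}|f(x-y)|\,dy\lesssim(2^{-k}R)^3\,Mf(x)$, where $M$ is the Hardy--Littlewood maximal operator, so summing the geometric series (convergent because $s>0$) gives a bound $\lesssim R^s\,Mf(x)$. For the far piece I would use H\"older's inequality with exponents $p,p'$: the tail $\int_{|y|\ge R}|y|^{-(3-s)p'}\,dy$ converges exactly when $(3-s)p'>3$, which the scaling relation guarantees, and evaluates to a constant times $R^{\,s-3/p}$. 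Optimizing
\[
|I_sf(x)|\lesssim R^s\,Mf(x)+R^{\,s-3/p}\,\|f\|_{L^p}
\]
over $R>0$ (note $s>0>s-3/p$) yields $|I_sf(x)|\lesssim (Mf(x))^{1-\theta}\,\|f\|_{L^p}^{\theta}$ with $\theta=\frac{sp}{3}$.

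Finally I would take the $L^2$ norm in $x$. The scaling relation gives the arithmetic identity $2(1-\theta)=p$, whence
\[
\|I_sf\|_{L^2}\lesssim\|f\|_{L^p}^{\theta}\,\big\|(Mf)^{1-\theta}\big\|_{L^2}=\|f\|_{L^p}^{\theta}\,\|Mf\|_{L^{p}}^{1-\theta}\lesssim\|f\|_{L^p},
\]
the last step being the $L^p$-boundedness of $M$. The one delicate point---and the main obstacle to respect---is precisely this last inequality: $M$ is bounded on $L^p$ only for $p>1$, and under the scaling relation $p>1$ is equivalent to $s<\frac32$. This is why the hypothesis excludes the endpoint $s=\frac32$, where $p=1$ and $M$ is merely of weak type $(1,1)$; thus the range $0\le s<\frac32$ marks exactly where the maximal-function argument closes.
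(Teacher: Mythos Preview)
Your argument is correct: this is the standard Hedberg maximal-function proof of the Hardy--Littlewood--Sobolev inequality, and every step (the Riesz-potential representation of $\Lambda^{-s}$, the near/far splitting, the dyadic bound giving $R^s Mf$, the H\"older tail bound giving $R^{s-3/p}\|f\|_{L^p}$, the optimization in $R$, and the arithmetic $2(1-\theta)=p$) checks out under the stated hypotheses. The paper itself does not supply a proof of this lemma at all; it simply quotes the inequality with attribution to Stein and Grafakos. So rather than a different route, you have provided a self-contained derivation where the paper defers to the literature; your observation that $p>1\Leftrightarrow s<\tfrac32$ is exactly what forces the endpoint restriction in the statement, a point the paper leaves implicit.
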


The following special Sobolev interpolation lemma will be used in the proof of Theorem \ref{thm1.2}.
\begin{lemma}[\cite{T,W,Stein}]
\label{lem2.2}
Let $s,k\geq0$ and $l\geq0$, then
\begin{equation}
\label{2-3}
\|\nabla^lf\|_{L^2}\leq\|\nabla^{l+k}f\|_{L^2}^{ 1-\theta }\|f\|_{\dot{H}^{-s}}^{ \theta },\quad\hbox{with}~\theta=\frac k{l+k+s}.
\end{equation}
\end{lemma}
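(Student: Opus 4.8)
The plan is to move everything to the Fourier side, where the operators $\nabla^l$, $\nabla^{l+k}$ and $\Lambda^{-s}$ all become multiplication by powers of $|\xi|$, and then interpolate by a single application of Hölder's inequality. By Plancherel's theorem we have
\begin{equation}
\nonumber
\|\nabla^lf\|_{L^2}^2=\int_{\mathbb{R}^3}|\xi|^{2l}|\hat{f}(\xi)|^2\,d\xi,\qquad
\|\nabla^{l+k}f\|_{L^2}^2=\int_{\mathbb{R}^3}|\xi|^{2(l+k)}|\hat{f}(\xi)|^2\,d\xi,
\end{equation}
and, by the definition of the homogeneous norm, $\|f\|_{\dot{H}^{-s}}^2=\int_{\mathbb{R}^3}|\xi|^{-2s}|\hat{f}(\xi)|^2\,d\xi$. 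Thus the whole statement reduces to an elementary inequality for the weighted $L^2$ integral of $|\hat{f}|^2$.

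The key algebraic step is to factor the weight $|\xi|^{2l}$ as a product of the other two weights raised to complementary powers. First I would write $|\xi|^{2l}=\bigl(|\xi|^{2(l+k)}\bigr)^{1-\theta}\bigl(|\xi|^{-2s}\bigr)^{\theta}$ and fix $\theta$ by matching exponents, i.e. by requiring $2l=2(l+k)(1-\theta)-2s\theta$; solving this linear equation gives exactly $\theta=\frac{k}{l+k+s}$, which also forces $\theta\in[0,1]$ under the hypotheses $s,k,l\geq0$. Correspondingly, since $|\hat{f}|^2=(|\hat{f}|^2)^{1-\theta}(|\hat{f}|^2)^{\theta}$, the integrand splits as
\begin{equation}
\nonumber
|\xi|^{2l}|\hat{f}|^2=\Bigl(|\xi|^{2(l+k)}|\hat{f}|^2\Bigr)^{1-\theta}\Bigl(|\xi|^{-2s}|\hat{f}|^2\Bigr)^{\theta}.
\end{equation}
Applying Hölder's inequality with the conjugate exponents $\frac{1}{1-\theta}$ and $\frac{1}{\theta}$ then yields
\begin{equation}
\nonumber
\int_{\mathbb{R}^3}|\xi|^{2l}|\hat{f}|^2\,d\xi\leq\left(\int_{\mathbb{R}^3}|\xi|^{2(l+k)}|\hat{f}|^2\,d\xi\right)^{1-\theta}\left(\int_{\mathbb{R}^3}|\xi|^{-2s}|\hat{f}|^2\,d\xi\right)^{\theta},
\end{equation}
and taking square roots gives precisely (\ref{2-3}).

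There is no real obstacle here beyond bookkeeping: the only points requiring care are the degenerate cases $\theta=0$ (when $k=0$) and $\theta=1$ (when $l=s=0$), in which one of the Hölder exponents becomes infinite; in both cases the asserted inequality is a trivial identity because one factor carries exponent zero, so these can simply be dispatched separately before running the generic $0<\theta<1$ argument. I expect the verification that the exponent matching produces the stated value $\theta=\tfrac{k}{l+k+s}$ to be the heart of the computation, and the rest follows mechanically from Plancherel and Hölder.
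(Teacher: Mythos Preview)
Your argument is correct: passing to the Fourier side via Plancherel, writing $|\xi|^{2l}=\bigl(|\xi|^{2(l+k)}\bigr)^{1-\theta}\bigl(|\xi|^{-2s}\bigr)^{\theta}$ with $\theta=k/(l+k+s)$, and then applying H\"older with exponents $\tfrac{1}{1-\theta}$ and $\tfrac{1}{\theta}$ is exactly the standard way to establish this interpolation inequality, and your exponent bookkeeping and treatment of the endpoint cases $\theta\in\{0,1\}$ are fine.

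The paper itself does not supply a proof of this lemma; it merely records the inequality and cites the references \cite{T,W,Stein}. So there is no ``paper's own proof'' to compare against---you have furnished the missing argument. What you gain is a self-contained, one-line derivation that makes clear the inequality is nothing more than H\"older on the Fourier side; the paper, by citing, implicitly points the reader to the more general interpolation theory (Stein's book and the Guo--Wang/Tan--Wu--Zhou/Wang decay papers) where such estimates appear as special cases of broader machinery. Either route is acceptable here since the lemma is used only as a black box in the decay argument of Section~5.
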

\section{Local Well-posedness}

We will   prove the local well-posedness by using Banach fixed point theorem. Let
$$
\mathcal{A}:=\{v\in C([0,T];H^2),~~~\|v\|_{L^{\infty} (0,T;H^2)} \leq R\},
$$
for some positive constant $R$ to be determined latter.

Assume that $\tilde{u} \in\mathcal{A}$ be given   and $ \tilde{u} (\cdot,0)= u_0 $. Consider
\begin{equation} \label{eq-1}
\left\{ \begin{aligned}
        &u_t+\Delta^2u-\Delta u=\Delta[(\tilde{u}-\sqrt{2})(\tilde{u}+\sqrt{2})u]+\tilde{u}\cdot\nabla u, \\
                  &  u(\cdot,0)=u_0,
                          \end{aligned} \right.
                          \end{equation}
Let $u(x,t)$ be the unique strong solution to (\ref{eq-1}). Define the fixed point map $F: \tilde{u}  \in\mathcal{A}  \rightarrow u\in\mathcal{A} $. We will prove that the map $F$ maps $\mathcal{A}  $ into $\mathcal{A} $ for suitable constant $R$ and small $T>0$ and $F$ is a contraction mapping on $\mathcal{A} $. Therefore, $F$ has a unique fixed point in $\mathcal{A} $. This proves the result.

In the following, we  establish some technical lemmas.
\begin{lemma}
\label{lema-1}
Let $\tilde{u} \in\mathcal{A}  $ be given  and $ \tilde{u}(\cdot,0) = u_0 $. Assume that the  constant $ \tilde{C}_0>0$  is independent of $R$. Then, there exists a unique strong solution $u(x,t)$ for system (\ref{eq-1}) such that
\begin{equation}\label{eq-3}
\|u\|_{L^{\infty}(0,T;H^2)}  \leq \tilde{C}_0,
\end{equation}
for some small $T>0$.
\end{lemma}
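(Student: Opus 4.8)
The plan is to derive an energy estimate for the linear problem \eqref{eq-1} in $H^2$, treating $\tilde u\in\mathcal A$ as a known coefficient with $\|\tilde u\|_{L^\infty(0,T;H^2)}\le R$, and then close the estimate on a short time interval so that the bound $\tilde C_0$ depends only on $\|u_0\|_{H^2}$ and not on $R$. Since $H^2(\mathbb R^3)\hookrightarrow L^\infty$, the coefficient $(\tilde u-\sqrt2)(\tilde u+\sqrt2)=\tilde u^2-2$ and its first two spatial derivatives are controlled in the appropriate $L^p$ norms by powers of $R$ via Lemma \ref{lem2.1}. Existence and uniqueness of the strong solution $u$ to the linear parabolic problem \eqref{eq-1} is standard (Galerkin approximation plus the very energy estimates below), so the substance is the bound \eqref{eq-3}.

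The key steps, in order, are as follows. First, multiply \eqref{eq-1} by $u$ and integrate over $\mathbb R^3$: the principal part gives $\frac12\frac{d}{dt}\|u\|_{L^2}^2+\|\Delta u\|_{L^2}^2+\|\nabla u\|_{L^2}^2$ on the left; on the right, $\int \Delta[(\tilde u^2-2)u]\,u\,dx=\int (\tilde u^2-2)u\,\Delta u\,dx$ is bounded by $\|\tilde u^2-2\|_{L^\infty}\|u\|_{L^2}\|\Delta u\|_{L^2}\le \varepsilon\|\Delta u\|_{L^2}^2+C_\varepsilon(1+R^2)^2\|u\|_{L^2}^2$, and $\int (\tilde u\cdot\nabla u)\,u\,dx=-\tfrac12\int(\nabla\cdot\tilde u)|u|^2\,dx$ (here with the convention $u\cdot\nabla u=\nabla\cdot(\tfrac12 u^2)$ matching \eqref{1-3}), bounded by $\|\nabla\tilde u\|_{L^3}\|u\|_{L^3}^2\lesssim R\,\|u\|_{L^2}\|\nabla u\|_{L^2}\le \varepsilon\|\nabla u\|_{L^2}^2+C_\varepsilon R^2\|u\|_{L^2}^2$. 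Second, apply $\Delta$ to \eqref{eq-1}, multiply by $\Delta u$, and integrate; the top-order term yields $\frac12\frac{d}{dt}\|\Delta u\|_{L^2}^2+\|\Delta^2 u\|_{L^2}^2+\|\nabla\Delta u\|_{L^2}^2$, and the two nonlinear terms, after integrating by parts to keep at most two derivatives on the $\tilde u^2-2$ factor and distributing the rest, are estimated by Gagliardo–Nirenberg interpolation (Lemma \ref{lem2.1}) and Young's inequality, absorbing the highest-order $\|\Delta^2 u\|_{L^2}$ and $\|\nabla\Delta u\|_{L^2}$ contributions into the left side at the cost of a constant $C(R)$ multiplying lower-order $H^2$ norms of $u$. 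Third, add the two estimates to get a differential inequality of the form
\[
\frac{d}{dt}\|u\|_{H^2}^2 + c\,\|u\|_{H^4}^2 \le C(R)\,\|u\|_{H^2}^2 .
\]
Fourth—and this is the point where the $R$-independence of $\tilde C_0$ is secured—Gronwall gives $\|u(t)\|_{H^2}^2\le e^{C(R)t}\|u_0\|_{H^2}^2$, so choosing $T=T(R)$ small enough that $e^{C(R)T}\le 2$ yields $\|u\|_{L^\infty(0,T;H^2)}\le \sqrt2\,\|u_0\|_{H^2}=:\tilde C_0$, which depends only on $\|u_0\|_{H^2}$.

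The main obstacle is the bookkeeping in the $\dot H^2$ estimate of the semilinear term $\Delta[(\tilde u^2-2)u]$: after applying $\Delta$ one gets terms such as $(\Delta\tilde u^2)\,u$, $\nabla\tilde u^2\cdot\nabla u$, and $(\tilde u^2-2)\Delta u$, each of which must be paired with $\Delta u$ and handled so that no more than four derivatives total and no more than two derivatives on any $\tilde u$-factor appear, using $\|\tilde u\|_{H^2}\le R$ together with the interpolation inequalities; one must verify that every resulting exponent of $\|u\|_{H^4}$ is strictly less than $2$ so that Young's inequality can absorb it. The convective term $\Delta(\tilde u\cdot\nabla u)$ is analogous but milder since it carries one fewer spatial derivative. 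Once these routine but careful estimates are assembled, the lemma follows; the subsequent contraction estimate for $F$ (difference of two solutions) is obtained by the same computation applied to the equation satisfied by $u_1-u_2$, and is not needed for this lemma.
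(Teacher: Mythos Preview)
Your proposal is correct and follows essentially the same route as the paper: an $L^2$ estimate obtained by testing with $u$, a $\dot H^2$ estimate obtained by applying $\Delta$ and testing with $\Delta u$, product/H\"older/Gagliardo--Nirenberg bounds on the two right-hand side terms using $\|\tilde u\|_{H^2}\le R$, and then closure by taking $T=T(R)$ small so that the resulting bound depends only on $\|u_0\|_{H^2}$.

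The only noticeable difference is in how the inequality is closed. The paper estimates the right-hand side by $C(R^2+R)(\|\Delta u\|_{L^2}^2+\|\nabla u\|_{L^2}^2)$, i.e.\ by the \emph{dissipation} terms, and then invokes the condition $(R^2+R)T\le 1$; your version instead places the \emph{lower-order} quantity $\|u\|_{H^2}^2$ on the right and applies Gronwall, choosing $T$ so that $e^{C(R)T}\le 2$. Your closing is the more transparent of the two, since it makes the $R$-independence of $\tilde C_0=\sqrt{2}\,\|u_0\|_{H^2}$ immediate, whereas the paper's absorption-of-dissipation step literally requires $C(R^2+R)<1$ rather than merely $T$ small. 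Either way the structure of the argument is the same.
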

\begin{proof}
Since system (\ref{eq-1}) is   linear with regular $\tilde{u}$, whose existence and uniqueness   was proved in Temem \cite{Temam}, then we only need to prove the a priori estimates (\ref{eq-3}) in the following.
Multiplying (\ref{eq-1})$_1$  by $u$,  integrating by parts over $\mathbb{R}^3$, we derive that
\begin{equation}
\begin{aligned}\label{eq-4}&
\frac12\frac d{dt} \|u\|^2_{L^2}+\|\Delta u\|_{L^2}^2+\|\nabla u\|^2_{L^2}
\\
=&\int_{\mathbb{R}^3}(\tilde{u}-\sqrt{2})(\tilde{u}+\sqrt{2})u\Delta udx+\int_{\mathbb{R}^3}(\tilde{u}\cdot\nabla u)udx
\\
\leq&C\|\Delta u\|_{L^2}\|u\|_{L^6}\|\tilde{u}-\sqrt{2}\|_{L^6}\|\tilde{u}+\sqrt{2}\|_{L^6}+C\|\tilde{u}\|_{L^3}\|\nabla u\|_{L^2}\|u\|_{L^6}
\\
\leq&C\|\nabla\tilde{u}\|^2_{L^2}\|\nabla u\|_{L^2}\|\Delta u\|_{L^2}+C\|\tilde{u}\|_{L^2}^{\frac12}\|\nabla\tilde{u}\|_{L^2}^{\frac12}\|\nabla u\|^2_{L^2}
\\
\leq&C(\|\nabla\tilde{u}\|^2_{L^2}+\|\nabla\tilde{u}\|_{L^2})(\|\Delta u\|_{L^2}^2+\|\nabla u\|_{L^2}^2)
\\
\leq&C(R^2+R)(\|\Delta u\|_{L^2}^2+\|\nabla u\|_{L^2}^2),
\end{aligned}\end{equation}
which gives
\begin{equation}\label{eq-9}
\|u\|_{L^2}^2+\int_0^T(\|\Delta u\|_{L^2}^2+\|\nabla u\|_{L^2}^2)ds\leq \tilde{C}_0,
\end{equation}
provided with $ (R^2+R)T\leq 1$.
Taking $ \Delta $ to (\ref{eq-1})$_1$, multiplying it by $ \Delta  u$ and integrating over $\mathbb{R}^3$, we see that
\begin{equation}
\begin{aligned}\label{eq-10}&
\frac12\frac d{dt}(\| 
\Delta u\|_{L^2}^2 + \| \Delta^2u\|_{L^2}^2+ \|\nabla\Delta u\|_{L^2}^2
\\
=& \int_{\mathbb{R}^3}\Delta[(\tilde{u}-\sqrt{2})(\tilde{u}+\sqrt{2})u)\Delta^2udx+\int_{\mathbb{R}^3}(\tilde{u}\cdot\nabla u)\cdot\Delta^2udx
\\
\leq&C\|\Delta^2u\|_{L^2}\|\Delta[(\tilde{u}-\sqrt{2})(\tilde{u}+\sqrt{2})u)\|_{L^2}+C\|\Delta^2u\|_{L^2}\|\tilde{u}\cdot\nabla u\|_{L^2}
\\
\leq&C\|\Delta^2u\|_{L^2}(\|\tilde{u}-\sqrt{2} \|_{L^6}\|\tilde{u}+\sqrt{2}\|_{L^6}\|\Delta u\|_{L^6}+\|\Delta(\tilde{u}-\sqrt{2})\|_{L^6}\|\tilde{u}+\sqrt{2}\|_{L^6}\|  u\|_{L^6}\\&+\|\tilde{u}-\sqrt{2} \|_{L^6}\|\Delta(\tilde{u}+\sqrt{2})\|_{L^6}\|  u\|_{L^6})+C\|\Delta^2u\|_{L^2}\|\nabla u\|_{L^6}\|\tilde{u}\|_{L^3}
\\
\leq&C\|\nabla\tilde{u}\|_{L^2}^2\|\Delta^2u\|_{L^2}\|\nabla\Delta u\|_{L^2}+C\|\tilde{u}\|_{L^2}^{\frac12}\|\nabla\tilde{u}\|_{L^2}^{\frac12}\|\Delta^2u\|_{L^2}\|\Delta u\|_{L^2}
\\
\leq&C(R^2+R)(\|\Delta^2u\|_{L^2}^2+\|\nabla\Delta u\|_{L^2}^2)+CR\|\Delta u\|_{L^2}^2,
\end{aligned}\end{equation} 
which leads
\begin{equation}
\label{eq-15}
 \| \Delta u\|^2_{L^2}+ \int_0^T(\| \Delta^2 u\|_{L^2}^2+\|\nabla \Delta  u\|_{L^2}^2)ds\leq \tilde{C}_0,
\end{equation}
as long as $(R^{2}+R^ )T\leq 1$. The proof is complete.

\end{proof}

By using Lemma  \ref{lema-1}, we can take $R=  \sqrt{\tilde{C}_0} $, and thus, $F$ maps $\mathcal{A} $ into $\mathcal{A} $. In the following, we prove that $F$ is a contraction mapping in the sense of weaker norm.
\begin{lemma}
\label{lem-3}
There exists a constant $\delta\in(0,1)$ such that for any $ \tilde{u}_i $ $(i=1,2)$,
\begin{equation}
\label{eq-16}\begin{aligned}
\|F(\tilde{u}_1 )-F(\tilde{u}_2 )\|_{L^2(0,T;H^2)}
\leq \delta
\|\tilde{u}_1-\tilde{u}_2\|_{L^2(0,T;H^2)}  ,\end{aligned}
\end{equation}
for some small $T>0$.
\end{lemma}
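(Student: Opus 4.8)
Write $u_i=F(\tilde u_i)$ for $i=1,2$, so each $u_i$ solves \eqref{eq-1} with datum $u_0$ and, by Lemma \ref{lema-1}, $\|u_i\|_{L^\infty(0,T;H^2)}\le\tilde C_0$. Put $w=u_1-u_2$ (hence $w(\cdot,0)=0$) and $v=\tilde u_1-\tilde u_2$. The plan is to subtract the two copies of \eqref{eq-1}, rearrange the nonlinear differences so that their dependence on $w$ and on $v$ is displayed linearly, run a $w(0)=0$ energy estimate at the $L^2$ and $\dot H^2$ levels exactly as in the proof of Lemma \ref{lema-1}, and then convert the resulting $L^\infty_t$ bound into the $L^2_t$ contraction by spending one power of $T$.

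First I would record the equation for $w$. The algebraic identities $(\tilde u_1^2-2)u_1-(\tilde u_2^2-2)u_2=(\tilde u_1^2-2)w+(\tilde u_1+\tilde u_2)\,v\,u_2$ and $\tilde u_1\cdot\nabla u_1-\tilde u_2\cdot\nabla u_2=\tilde u_1\cdot\nabla w+v\cdot\nabla u_2$ give
\begin{equation}\nonumber
\partial_t w+\Delta^2w-\Delta w=\Delta\big[(\tilde u_1^2-2)w\big]+\tilde u_1\cdot\nabla w+\Delta\big[(\tilde u_1+\tilde u_2)\,v\,u_2\big]+v\cdot\nabla u_2 ,
\end{equation}
where the first two terms on the right are linear in $w$ --- to be absorbed into the dissipation, up to a Gronwall remainder --- and the last two form the source, linear in $v$.

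Next I would run the energy estimates, mimicking \eqref{eq-4} and \eqref{eq-10}: I test the $w$-equation with $w$ and its $\Delta$-derivative with $\Delta w$, integrate by parts, and invoke $H^1\hookrightarrow L^6$ and $H^2\hookrightarrow L^\infty$ on $\mathbb R^3$, Lemma \ref{lem2.1}, the a priori bounds $\|\tilde u_i\|_{L^\infty(0,T;H^2)}\le R$ and $\|u_i\|_{L^\infty(0,T;H^2)}\le\tilde C_0$, Young's inequality, and $\|\nabla w\|_{L^2}^2\le\|w\|_{L^2}\|\Delta w\|_{L^2}$ to demote the over-strong $\nabla w$-contributions. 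Adding the two resulting estimates, this absorbs the $w$-linear terms into the dissipation and bounds the $v$-linear source by $C(R)\|v\|_{H^2}^2$, giving
\begin{equation}\nonumber
\tfrac{d}{dt}\big(\|w\|_{L^2}^2+\|\Delta w\|_{L^2}^2\big)\le C(R)\big(\|w\|_{L^2}^2+\|\Delta w\|_{L^2}^2\big)+C(R)\|v\|_{H^2}^2 ,
\end{equation}
where $C(R)$ depends only on $R$ and $\tilde C_0$. Since $w(0)=0$, Gronwall's inequality and $T\le1$ yield $\sup_{[0,T]}(\|w\|_{L^2}^2+\|\Delta w\|_{L^2}^2)\le C(R)\int_0^T\|v(s)\|_{H^2}^2\,ds$, hence, interpolating $\|\nabla w\|_{L^2}$ once more, $\sup_{[0,T]}\|w\|_{H^2}^2\le C(R)\|\tilde u_1-\tilde u_2\|_{L^2(0,T;H^2)}^2$.

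Finally, $\|F(\tilde u_1)-F(\tilde u_2)\|_{L^2(0,T;H^2)}^2=\int_0^T\|w(t)\|_{H^2}^2\,dt\le T\sup_{[0,T]}\|w\|_{H^2}^2\le C(R)\,T\,\|\tilde u_1-\tilde u_2\|_{L^2(0,T;H^2)}^2$, so shrinking $T$ so that $C(R)T=\delta^2<1$ yields \eqref{eq-16}. I expect the only real work to be the bookkeeping in the $\dot H^2$ estimate: after Leibniz-expanding $\Delta[(\tilde u_1^2-2)w]$ and $\Delta[(\tilde u_1+\tilde u_2)\,v\,u_2]$, one must check term by term that every factor can be placed in $L^\infty$, $L^4$ or $L^6$ using only the a priori bounds for $\tilde u_i$ and $u_i$, that no term needs more than two spatial derivatives of $v$, and that the genuinely $\|\Delta^2w\|_{L^2}$-strong contributions carry a small constant so that they are absorbed by the dissipation (here the $\|\Delta w\|_{L^2}^2$ produced at the $\dot H^2$ level is mopped up by the dissipation of the $L^2$-level estimate). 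Note that the factor $T$ producing the contraction is generated purely by the last step $\int_0^T\le T\sup_{[0,T]}$, not inside the energy estimate --- this is exactly why the contraction can only be claimed in the weaker norm $L^2(0,T;H^2)$ rather than in the $L^\infty(0,T;H^2)$ norm defining $\mathcal A$.
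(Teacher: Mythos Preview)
Your proposal is correct and follows a somewhat different route from the paper. The paper performs only the $L^2$-level energy estimate on the difference $u=u_1-u_2$ (testing the difference equation with $u$), arriving at an inequality of the form
\[
\tfrac{d}{dt}\|u\|_{L^2}^2+\|\Delta u\|_{L^2}^2+\|\nabla u\|_{L^2}^2\le C\|u\|_{L^2}^2+C\|\nabla\tilde u\|_{L^2}^2,
\]
and then simply invokes Gronwall and ``$T$ small enough.'' Your argument instead runs \emph{both} the $L^2$ and the $\dot H^2$ energy estimates, obtains a genuine $L^\infty(0,T;H^2)$ bound on $w$, and only afterwards converts to $L^2(0,T;H^2)$ via the crude inequality $\int_0^T\le T\sup_{[0,T]}$; this last step is what manufactures the small factor, and you isolate it explicitly. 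The cost is one extra round of Leibniz bookkeeping at the $\dot H^2$ level, but the benefit is that the mechanism producing $\delta<1$ is transparent: in the paper's single-level estimate the constant multiplying the source $\|\nabla\tilde u\|_{L^2}^2$ does \emph{not} vanish as $T\to0$, so the contraction in the stated norm $L^2(0,T;H^2)$ is not immediately visible from it without additional argument. Your algebraic decomposition $(\tilde u_1^2-2)u_1-(\tilde u_2^2-2)u_2=(\tilde u_1^2-2)w+(\tilde u_1+\tilde u_2)\,v\,u_2$ is also cleaner than the paper's, whose factors $(\tilde u\pm\sqrt2)$ with $\tilde u=\tilde u_1-\tilde u_2$ appear to be typographical slips.
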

\begin{proof}
Suppose that $ u_i(x,t)$ $(i=1,2)$ are the solutions to problem (\ref{eq-1})  corresponding to $  \tilde{u}_i $.
Denote
$$
u=u_1-u_2,\quad~~\tilde{u}=\tilde{u}_1-\tilde{u}_2,
$$
we have
\begin{equation}\label{u2}\begin{aligned}&
u_t+\Delta ^2u-\Delta u\\=&\Delta[(\tilde{u}_1-\sqrt{2})(\tilde{u}_1+\sqrt{2})u+(\tilde{u}_1-\sqrt{2})(\tilde{u} +\sqrt{2})u_2+(\tilde{u}-\sqrt{2})(\tilde{u}_2-\sqrt{2})u_2]\\&+\tilde{u}_1\cdot\nabla u+\tilde{u}\cdot\nabla u_2.\end{aligned}
\end{equation}
Multiplying (\ref{u2}) by $u$ and integrating on the whole space, then, after integration by parts, we get
\begin{equation}
\begin{aligned}\label{eq-17}
&\frac12\frac d{dt} \|u\|_{L^2}^2+\|\Delta u\|^2_{L^2}+\|\nabla u\|_{L^2}^2
\\
=&\int_{\mathbb{R}^3}(\tilde{u}_1-\sqrt{2})(\tilde{u}_1+\sqrt{2})u\Delta udx+\int_{\mathbb{R}^3}(\tilde{u}_1-\sqrt{2})(\tilde{u} +\sqrt{2})u_2\Delta udx\\&+\int_{\mathbb{R}^3}(\tilde{u}-\sqrt{2})(\tilde{u}_2+\sqrt{2})u_2\Delta udx+\int_{\mathbb{R}^3}(\tilde{u}_1\cdot\nabla u)\cdot udx+\int_{\mathbb{R}^3}(\tilde{u}\cdot\nabla u_2)\cdot udx
\\
\leq&\frac12\|\Delta u\|_{L^2}^2+C\|\tilde{u}_1-\sqrt{2}\|^2_{L^{\infty}}\|\tilde{u}_1+\sqrt{2}\|_{L^{\infty}}^2\|u\|^2_{L^2}+C\|\tilde{u}_1-\sqrt{2}\|_{L^6}^2
\|\tilde{u}+\sqrt{2}\|_{L^6}^2\|u_2\|_{L^6}^2\\&+C\|\tilde{u} -\sqrt{2}\|_{L^6}^2\|\tilde{u}_2+\sqrt{2}\|_{L^6}^2\|u_2\|^2_{L^6}+C\|\tilde{u}_1\|_{L^6}\|\nabla u\|_{L^3}\|u\|_{L^2}+C\|\tilde{u}\|_{L^6}\|\nabla u_2\|_{L^3}\|u\|_{L^2}
\\
\leq&\frac12\|\Delta u\|_{L^2}^2+\frac12\|\nabla u\|_{L^2}^2+C\|u\|^2_{L^2}+C\|\nabla\tilde{u}\|_{L^2}^2.
\end{aligned}\end{equation}
Using the Gronwall's inequality, taking $T$ small enough, we arrive at (\ref{eq-9}) and complete the proof.
\end{proof}

Next, we give the proof of Theorem \ref{thm1.0}.
\begin{proof}[Proof of Theorem \ref{thm1.0}]
By Lemmas \ref{lema-1}, \ref{lem-3}   and a variant of the Banach fixed point theorem, using weak compactness, we complete the proof.
\end{proof}

\section{Small initial data global well-posedness}

In this section, on the basis of the assumptions of Theorem \ref{thm1.1}, we establish the energy estimates of the solution to the Cauchy problem (\ref{3-0}).
\begin{lemma}
\label{lem3.1} Assume $T>0$ and $0<\delta\ll 1$. Let
\begin{equation}\label{3-1}
\sup_{0\leq t\leq T}\|u(t)\|_{H^1}\leq\delta,
\end{equation} and all assumptions in Theorem \ref{thm1.1} hold. Then, for any $t\in[0,T]$ and integer $k\geq0$, we have
\begin{equation}
\label{3-2}
\begin{aligned}&
 \frac d{dt}\sum_{l=k}^{k+1}\|\nabla^lu\|_{L^2}^2+\sum_{l=k}^{k+1}\|\nabla^{l+2}u\|_{L^2}^2+\sum_{l=k}^{k+1}\|\nabla^{l+1}u\|_{L^2}^2
\\
\leq&C_l\sum_{l=k}^{k+1}(\|u\|_{H^1}+\|u\|^2_{H^1})(\|\nabla^{l+1}u\|_{L^2}^2+\|\nabla^{l+2}u\|_{L^2}^2).
\end{aligned}
\end{equation}
\end{lemma}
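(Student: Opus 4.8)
The plan is to carry out the $L^{2}$ energy estimate at each order $l\in\{k,k+1\}$ and then add the two. Fix such an $l$: apply $\nabla^{l}$ to the equation in the form (\ref{3-0}), multiply by $\nabla^{l}u$, and integrate over $\mathbb{R}^3$. After the routine integrations by parts the linear terms produce $\tfrac12\tfrac{d}{dt}\|\nabla^{l}u\|_{L^2}^2$ together with the dissipative quantities $\|\nabla^{l+2}u\|_{L^2}^2$ (from the biharmonic term) and $\|\nabla^{l+1}u\|_{L^2}^2$ (from the Laplacian term), which account for the left‑hand side of (\ref{3-2}). It therefore remains to control the two nonlinear integrals
\[
I_{l}=\int_{\mathbb{R}^3}\nabla^{l}\Delta(u^{3})\cdot\nabla^{l}u\,dx,\qquad
J_{l}=\int_{\mathbb{R}^3}\nabla^{l}(u\cdot\nabla u)\cdot\nabla^{l}u\,dx .
\]
Since $I_{l}$ is cubic and $J_{l}$ quadratic in $u$, one expects $I_{l}$ to be responsible for the $\|u\|_{H^1}^{2}$ factor and $J_{l}$ for the $\|u\|_{H^1}$ factor on the right of (\ref{3-2}).

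For the cubic term I would integrate by parts twice, $I_{l}=\int_{\mathbb{R}^3}\nabla^{l}(u^{3})\,\Delta\nabla^{l}u\,dx$, so that $|I_{l}|\le\|\nabla^{l}(u^{3})\|_{L^2}\|\nabla^{l+2}u\|_{L^2}$. Expanding $\nabla^{l}(u^{3})$ by the Leibniz rule into terms $\nabla^{a}u\,\nabla^{b}u\,\nabla^{c}u$ with $a+b+c=l$, Hölder's inequality with exponents $(6,6,6)$ and the Sobolev embedding $\dot{H}^{1}(\mathbb{R}^3)\hookrightarrow L^{6}$ give
\[
\|\nabla^{a}u\,\nabla^{b}u\,\nabla^{c}u\|_{L^2}\lesssim\|\nabla^{a+1}u\|_{L^2}\|\nabla^{b+1}u\|_{L^2}\|\nabla^{c+1}u\|_{L^2},
\]
and interpolating each factor, by Lemma \ref{lem2.1}, between $\|\nabla u\|_{L^2}$ and $\|\nabla^{l+1}u\|_{L^2}$ collapses the product to $\lesssim\|\nabla u\|_{L^2}^{2}\|\nabla^{l+1}u\|_{L^2}$ (the interpolation exponents add up precisely because $a+b+c=l$). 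Hence $|I_{l}|\lesssim\|\nabla u\|_{L^2}^{2}\|\nabla^{l+1}u\|_{L^2}\|\nabla^{l+2}u\|_{L^2}\lesssim\|u\|_{H^1}^{2}\big(\|\nabla^{l+1}u\|_{L^2}^{2}+\|\nabla^{l+2}u\|_{L^2}^{2}\big)$ by Young's inequality.

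For the convective term I would first use its conservation form: $u\cdot\nabla u$ is a constant‑coefficient combination of first derivatives of $u^{2}$, so that $J_{0}=0$ and, for $l\ge1$, one integration by parts gives $|J_{l}|\lesssim\|\nabla^{l}(u^{2})\|_{L^2}\|\nabla^{l+1}u\|_{L^2}$. Expanding $\nabla^{l}(u^{2})$ by Leibniz and treating each term $\nabla^{a}u\,\nabla^{b}u$ ($a+b=l$) by Hölder (with $L^{6}\times L^{3}$, using $\dot{H}^{1}\hookrightarrow L^{6}$ and $\dot{H}^{1/2}\hookrightarrow L^{3}$) and Lemma \ref{lem2.1}, one keeps exactly one factor estimated by $\|u\|_{L^{6}}\lesssim\|\nabla u\|_{L^2}\le\|u\|_{H^1}$ — this is what supplies the single power of $\|u\|_{H^1}$ — and interpolates the rest between $\|\nabla u\|_{L^2}$ and $\|\nabla^{l+1}u\|_{L^2}$. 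A weighted Young's inequality, arranged so as to keep one full factor of $\|\nabla u\|_{L^2}$ outside, then recombines the remainder and yields $|J_{l}|\lesssim\|u\|_{H^1}\big(\|\nabla^{l+1}u\|_{L^2}^{2}+\|\nabla^{l+2}u\|_{L^2}^{2}\big)$.

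Summing these bounds over $l=k$ and $l=k+1$ produces (\ref{3-2}) with $C_{l}$ depending only on $k$. The main obstacle is the interpolation bookkeeping in the previous two paragraphs: the hypotheses (\ref{bound})–(\ref{3-1}) supply smallness of $\|u\|_{H^1}$ but of no higher Sobolev norm, so every intermediate quantity $\|\nabla^{j}u\|_{L^2}$ with $2\le j\le l+1$ that appears must be rewritten, via Lemma \ref{lem2.1}, as an interpolation between the small quantity $\|\nabla u\|_{L^2}$ and a genuine dissipation norm, and one must verify that after all these interpolations and the concluding Young's inequalities the surviving power of $\|u\|_{H^1}$ is at least $1$ while the surviving $L^{2}$–norms are precisely squares of $\|\nabla^{l+1}u\|_{L^2}$ and $\|\nabla^{l+2}u\|_{L^2}$. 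Of the two nonlinear terms, $J_{l}$ is the more delicate, since extracting a full (rather than merely a fractional) power of $\|u\|_{H^1}$ from a quadratic, single–derivative expression forces one to isolate the lone "coefficient" copy of $u$ before balancing the remaining derivatives.
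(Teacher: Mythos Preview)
Your energy identity misfires at the first step, and this is not bookkeeping but the crux of the lemma. You assert that the linear part of (\ref{3-0}) yields the dissipation $\|\nabla^{l+1}u\|_{L^2}^{2}$ and that the only potential contribution left is $I_{l}=\int\nabla^{l}\Delta(u^{3})\,\nabla^{l}u\,dx$. But the right-hand side of (\ref{3-0}) is $\Delta(u^{3}-2u)$, not $\Delta(u^{3})$; equivalently, in (\ref{1-3}) the potential is $u^{3}-u$ and the $-u$ piece is \emph{destabilising}. Moving every linear Laplacian to the left gives
\[
\tfrac12\tfrac{d}{dt}\|\nabla^{l}u\|_{L^2}^{2}+\|\nabla^{l+2}u\|_{L^2}^{2}-\|\nabla^{l+1}u\|_{L^2}^{2}
=\int_{\mathbb{R}^3}\nabla^{l}(u^{3})\,\nabla^{l+2}u\,dx+J_{l},
\]
with the wrong sign on $\|\nabla^{l+1}u\|_{L^2}^{2}$. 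No estimate on $I_{l}$ or $J_{l}$ can repair this: the bad term is linear in $u$, carries no small prefactor, and absorbing it by the interpolation $\|\nabla^{l+1}u\|_{L^2}^{2}\le\varepsilon\|\nabla^{l+2}u\|_{L^2}^{2}+C_{\varepsilon}\|\nabla^{l}u\|_{L^2}^{2}$ leaves an uncontrolled $+C\|\nabla^{l}u\|_{L^2}^{2}$ on the right, which destroys (\ref{3-2}).

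The paper's device---and the idea you are missing---is to refuse to split off the linear part of the double well. One keeps $u^{3}-2u$ intact, factors it as $u(u+\sqrt{2})(u-\sqrt{2})$, and then runs exactly your $L^{6}\times L^{6}\times L^{6}$ product estimate on this triple product, using that $\nabla(u\pm\sqrt{2})=\nabla u$ so that the shifted factors obey the same Sobolev bounds as $u$ itself. This yields $\|\nabla^{l}(u^{3}-2u)\|_{L^2}\lesssim\|\nabla u\|_{L^2}^{2}\|\nabla^{l+1}u\|_{L^2}$ and hence the $\|u\|_{H^{1}}^{2}$ prefactor in (\ref{3-2}), while the anti-dissipative $-2u$ has been swallowed by the factorisation rather than left to spoil the energy balance. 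Your handling of $I_{l}$ (on $u^{3}$ alone) and of $J_{l}$ is otherwise in the right spirit; for $J_{l}$ the paper in fact takes a shorter route, using the $L^{6/5}$--$L^{6}$ pairing to get $|J_{l}|\lesssim\|u\|_{L^{3}}\|\nabla^{l+1}u\|_{L^2}^{2}$ directly.
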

\begin{proof}

For any integer $k\geq0$, applying $\nabla^l$ ($l=k,k+1 $) to (\ref{3-0})$_1$, multiplying the resulting identities by $\nabla^lu$, integrating over $\mathbb{R}^3$ by parts, we find that
\begin{equation}
\label{3-3}
\begin{aligned}&\frac12\frac d{dt}\|\nabla^lu\|_{L^2}^2+\|\nabla^{l+2}u\|_{L^2}^2+\|\nabla^{l+1}u\|_{L^2}^2
\\
=&\int_{\mathbb{R}^3}\nabla^l(u^3-2u)\cdot\nabla^{l+2}udx+\int_{\mathbb{R}^3}\nabla^l(u\cdot\nabla u)\cdot\nabla^ludx
 .
\end{aligned}\end{equation}
Note that
\begin{equation}
\begin{aligned}\label{3-4}&
\int_{\mathbb{R}^3}\nabla^l(u^3-2u)\cdot\nabla^{l+2}udx
\\
\lesssim&\|\nabla^{l+2}u\|_{L^2}\|\nabla^l[u(u+\sqrt{2})(u-\sqrt{2})]\|_{L^2}
\\
\lesssim&\|\nabla^{l+2}u\|_{L^2}\left(\|\nabla^lu\|_{L^6}\|u+\sqrt{2}\|_{L^6}\|u-\sqrt{2}\|_{L^6}\right.\\&\left.+\|\nabla^l(u+\sqrt{2})\|_{L^6}\|u \|_{L^6}\|u-\sqrt{2}\|_{L^6}+\|\nabla^l(u-\sqrt{2})\|_{L^6}\|u+\sqrt{2}\|_{L^6}\|u\|_{L^6}\right)
\\
\lesssim&\|\nabla u\|_{L^2}^2\|\nabla^{l+2}u\|_{L^2}\|\nabla^{l+1}u\|_{L^2}
\\
\lesssim&\|\nabla u\|_{L^2}^2(\|\nabla^{l+2}u\|_{L^2}^2+\|\nabla^{l+1}u\|_{L^2}^2),\end{aligned}
\end{equation}
and
\begin{equation}
\begin{aligned}\label{3-8}
\int_{\mathbb{R}^3}\nabla^l(u\cdot\nabla u)\cdot\nabla^ludx
=&-\frac12\int_{\mathbb{R}^3}\nabla^l(\nabla\cdot u^2)\cdot\nabla^ludx
\\
\lesssim&\|\nabla^l(\nabla\cdot u^2)\|_{L^{\frac65}}\|\nabla^lu\|_{L^6}
\\
\lesssim& \|u\|_{L^3}\|\nabla^{l+1}u\|_{L^2}\|\nabla^iu\|_{L^6}
\\
\lesssim&\|u\|_{L^3}\|\nabla^{l+1}u\|_{L^2}^2.
\end{aligned}
\end{equation}
Plugging (\ref{3-4}) and (\ref{3-8}) into (\ref{3-3}), we conclude that
\begin{equation}
\label{3-14}
\frac12\frac d{dt}\|\nabla^lu\|_{L^2}^2+\|\nabla^{l+2}u\|_{L^2}^2+\|\nabla^{l+1}u\|_{L^2}^2\leq C(\|u\|_{H^1}^2+\|u\|_{H^1})(\|\nabla^{l+2}u\|_{L^2}^2+\|\nabla^{l+1}u\|_{L^2}^2),
\end{equation}
then we complete the proof.
\end{proof}

Now, on the basis of the assumption that $\|u_0\|_{H^1}$ is sufficiently small, we propose to prove the existence and uniqueness of global solution to Cauchy problem (\ref{1-3}).
\begin{proof}[Proof of Theorem \ref{thm1.1}]There are two steps for us to prove Theorem \ref{thm1.1}.

Step 1. Global small $\mathcal{E}_1$ solution.

It follows from the assumption (\ref{3-1}), taking $k=0$ in (\ref{3-2}), we have for any $t\in[0,T]$,
\begin{equation}\begin{aligned}
\label{6-1}&
\frac d{dt}\sum_{l=0}^1\|\nabla^lu\|_{L^2}^2+\sum_{l=0}^{ 1}\|\nabla^{l+2}u\|_{L^2}^2+\sum_{l=0}^{1}\|\nabla^{l+1}u\|_{L^2}^2
\\
\leq&C_2(\sqrt{\mathcal{E}_1(t)}+\mathcal{E}_1(t))\mathcal{D}_1(t)\leq C_2\delta(\delta+1)\mathcal{D}_1(t) .
\end{aligned}
\end{equation}
By (\ref{6-1}), we can choose a sufficiently small $\delta$, such that
\begin{equation}
\label{6-2}
\mathcal{E}_1(t)+\int_0^t\mathcal{D}_1(\tau)d\tau\leq\tilde{C}_2\mathcal{E}_1(0),\quad\forall t\in[0,T].
\end{equation}
Suppose that $\varepsilon_0=\delta+\delta^2$ is a positive constant,
where $\delta>0$ is given in Lemmas \ref{lem3.1}. We also choose initial data $u_0$ and small constant $\delta_0$, such that
$$
\sqrt{\mathcal{E}_1(0)}\leq\sqrt{\delta_0}:=\frac{\varepsilon_0}{2\sqrt{1+\tilde{C}_2}}.
$$
Next, define the lifespan of solutions of problem (\ref{3-0}) by
$$
T:=\sup\left\{t:\sup_{0\leq\tau\leq t}\sqrt{\mathcal{E}_1(s)}\leq\varepsilon_0\right\}.
$$
Note that
$$
\sqrt{\mathcal{E}_1(0)}\leq\frac{\varepsilon_0}{2\sqrt{1+\tilde{C}_2}}\leq\frac{\varepsilon_0}2<\varepsilon_0\leq\varepsilon,
$$
hence $T>0$ holds true from the local existence result  and the continuation argument. If the time $T$ is finite, from the definition of $T$, we have
$$
\sup_{0\leq\tau\leq T}\sqrt{\mathcal{E}_1(\tau)}=\varepsilon_0.
$$
However, on the basis of the uniform a priori estimate (\ref{6-2}), the following inequalities hold:
$$
\sup_{0\leq\tau\leq T}\sqrt{\mathcal{E}_1(\tau)}\leq\sqrt{\tilde{C}_2}\sqrt{\mathcal{E}_1(0)}\leq\frac{\sqrt{\tilde{C}_2}\varepsilon_0}{2\sqrt{1+\tilde{C}_2}}\leq\frac{\varepsilon_0}2,
$$
which is a contradiction. Therefore, $T=\infty$, and the local solution $u(t)$ obtained in Theorem \ref{thm1.0} can be extent to infinite time. Thus, there exists a unique solution $u(t)\in L^{\infty}([0,\infty];H^1)$ for the Cauchy problem (\ref{3-0}), and the inequality (\ref{1-5}) holds.

Recall  (\ref{3-2}), for $N\geq2$, we have
\begin{equation}
\begin{aligned}\label{6-3}& \frac d{dt}\sum_{l=0}^{N}\|\nabla^lu\|_{L^2}^2+\sum_{l=0}^{N}\|\nabla^{l+2}u\|_{L^2}^2+\sum_{l=0}^{N}\|\nabla^{l+1}u\|_{L^2}^2
\\
\leq&(\sqrt{\mathcal{E}_2(t)}+\mathcal{E}_2(t))\mathcal{D}_N(t). \end{aligned}
\end{equation}
By using the smallness of $\varepsilon_0$ and (\ref{6-3}), we deduce that
$$
\mathcal{E}_N(t)+\int_0^t\mathcal{D}_N(t)\leq C\mathcal{E}_N(0),\quad\forall t\in[0,\infty],
$$
this complete the proof of Theorem \ref{thm1.1}.

\end{proof}

\section{Decay estimates}
In this section, we first derive the evolution of the negative Sobolev norms of the solution to the Cauchy problem (\ref{1-3}).
In order to estimate the convective term and the double-well potential, we shall restrict ourselves to that $s\in[0,\frac12]$.

For the homogeneous Sobolev space, the following lemma holds:
\begin{lemma}
\label{lem5.1}
Suppose that all the assumptions in Lemma \ref{lem3.1} are in force. For $s\in[0,\frac12]$, we have
\begin{equation}
\label{5-1}\begin{aligned}
\frac d{dt}\|u(t)\|^2_{\dot{H}^{-s}}+\|\nabla^2u(t)\|^2_{\dot{H}^{-s}}+\|\nabla u(t)\|^2_{\dot{H}^{-s}}\lesssim\|\nabla u\|_{H^1}^2\|u(t)\| _{\dot{H}^{-s}},
\end{aligned}
\end{equation}where the parameter $\delta$ is the same as (\ref{3-1}).
\end{lemma}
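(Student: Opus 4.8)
The plan is to repeat, at the homogeneous level $-s$, the energy scheme that produced Lemma \ref{lem3.1}. First I would apply $\Lambda^{-s}$ to the equation (\ref{3-0})$_1$, take the $L^2$ inner product with $\Lambda^{-s}u$, and integrate by parts in the two linear terms $\Delta^2u$ and $-\Delta u$; since $\Lambda^{-s}$ commutes with $\partial_t$ and with every spatial derivative and $\|\nabla^ju\|_{\dot{H}^{-s}}=\|\Lambda^{-s}\nabla^ju\|_{L^2}$, this yields an identity of the form
\[
\tfrac12\tfrac{d}{dt}\|u\|_{\dot{H}^{-s}}^2+\|\nabla^2u\|_{\dot{H}^{-s}}^2+\|\nabla u\|_{\dot{H}^{-s}}^2=I_1+I_2,
\]
with $I_1=\int_{\mathbb{R}^3}\Lambda^{-s}\Delta(u^3)\cdot\Lambda^{-s}u\,dx$ and $I_2=\int_{\mathbb{R}^3}\Lambda^{-s}(u\cdot\nabla u)\cdot\Lambda^{-s}u\,dx$, the linear piece $-2\Delta u$ inside $\Delta(u^3-2u)$ having been absorbed into the left‑hand dissipation exactly as in the reduction from (\ref{1-3}) to (\ref{3-0}). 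It then remains to dominate $I_1$ and $I_2$ by $\|\nabla u\|_{H^1}^2\|u\|_{\dot{H}^{-s}}$.

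For the double‑well term I would keep $\Lambda^{-s}u$ untouched and apply Cauchy--Schwarz, $I_1\le\|\Delta(u^3)\|_{\dot{H}^{-s}}\|u\|_{\dot{H}^{-s}}=\|u^3\|_{\dot{H}^{2-s}}\|u\|_{\dot{H}^{-s}}$, which is legitimate since $2-s\in[\tfrac32,2]$. A standard Moser‑type (fractional Leibniz) estimate gives $\|u^3\|_{\dot{H}^{2-s}}\lesssim\|u\|_{L^\infty}^2\|u\|_{\dot{H}^{2-s}}$; interpolating $\dot{H}^{2-s}$ between $\dot{H}^1$ and $\dot{H}^2$ gives $\|u\|_{\dot{H}^{2-s}}\lesssim\|\nabla u\|_{H^1}$, and the Gagliardo--Nirenberg inequality of Lemma \ref{lem2.1} gives $\|u\|_{L^\infty}^2\lesssim\|\nabla u\|_{L^2}\|\nabla^2u\|_{L^2}$ in $\mathbb{R}^3$. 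Feeding in the smallness $\|u(t)\|_{H^1}\le\delta$ from (\ref{3-1}) then yields $\|u^3\|_{\dot{H}^{2-s}}\lesssim\delta\|\nabla u\|_{H^1}^2$, hence $I_1\lesssim\delta\,\|\nabla u\|_{H^1}^2\|u\|_{\dot{H}^{-s}}$.

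For the convective term, Cauchy--Schwarz gives $I_2\le\|u\cdot\nabla u\|_{\dot{H}^{-s}}\|u\|_{\dot{H}^{-s}}$, and I would estimate $\|u\cdot\nabla u\|_{\dot{H}^{-s}}$ by the Hardy--Littlewood--Sobolev inequality of Lemma \ref{lem2.3}: with $\tfrac12+\tfrac s3=\tfrac1p$ one has $p\in[\tfrac32,2]\subset(1,2]$ because $s\in[0,\tfrac12]$, so $\|u\cdot\nabla u\|_{\dot{H}^{-s}}\lesssim\|u\,\nabla u\|_{L^p}\le\|u\|_{L^6}\|\nabla u\|_{L^q}$ where $\tfrac16+\tfrac1q=\tfrac1p$, i.e.\ $q=\tfrac3{1+s}\in[2,3]$. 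The embedding $\dot{H}^1\hookrightarrow L^6$ gives $\|u\|_{L^6}\lesssim\|\nabla u\|_{L^2}$, and Lemma \ref{lem2.1} gives $\|\nabla u\|_{L^q}\lesssim\|\nabla u\|_{H^1}$ since $q\in[2,6]$, so $I_2\lesssim\|\nabla u\|_{L^2}\|\nabla u\|_{H^1}\|u\|_{\dot{H}^{-s}}\le\|\nabla u\|_{H^1}^2\|u\|_{\dot{H}^{-s}}$. Adding the two bounds to the energy identity gives (\ref{5-1}).

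The single step where the restriction $s\in[0,\tfrac12]$ is genuinely used — and the one I would watch most carefully — is the convective estimate: to avoid an uncontrollable $\|u\|_{L^2}$ factor I must place $u$ in $L^6$, whereupon the H\"older balance pins $\nabla u$ into $L^q$ with $q=\tfrac3{1+s}$, which stays $\ge2$ — the range reachable from $\nabla u\in H^1$ — precisely when $s\le\tfrac12$. The rest is routine; the only real care needed is to match up the Moser and Hardy--Littlewood--Sobolev exponents and to make sure the right‑hand side comes out linear in $\|u\|_{\dot{H}^{-s}}$ with the other factor time‑integrable, since that is exactly the shape needed to conclude — by dividing by $\|u\|_{\dot{H}^{-s}}$ and using $\int_0^\infty\|\nabla u\|_{H^1}^2\,ds<\infty$ from Theorem \ref{thm1.1} — that the negative Sobolev norm stays bounded along the flow.
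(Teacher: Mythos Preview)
There is a genuine sign error at the very first step, and it is exactly the obstacle that the paper's factorization is designed to dodge. You assert the identity
\[
\tfrac12\tfrac{d}{dt}\|u\|_{\dot H^{-s}}^{2}+\|\nabla^{2}u\|_{\dot H^{-s}}^{2}+\|\nabla u\|_{\dot H^{-s}}^{2}=I_1+I_2,
\qquad I_1=\int\Lambda^{-s}\Delta(u^{3})\,\Lambda^{-s}u\,dx,
\]
claiming that the linear piece $-2\Delta u$ inside $\Delta(u^{3}-2u)$ ``is absorbed into the left-hand dissipation exactly as in the reduction from (\ref{1-3}) to (\ref{3-0}).'' But that reduction goes the \emph{opposite} way: one \emph{adds} $-\Delta u$ to both sides of (\ref{1-3}) in order to manufacture a good $+\|\nabla u\|^{2}$ on the left, at the price of changing $u^{3}-u$ to $u^{3}-2u$ on the right. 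If instead you peel off $-2\Delta u$ from $\Delta(u^{3}-2u)$ and move it left, you obtain $\partial_t u+\Delta^{2}u+\Delta u=\Delta(u^{3})+u\!\cdot\!\nabla u$, and testing against $\Lambda^{-2s}u$ yields
\[
\tfrac12\tfrac{d}{dt}\|u\|_{\dot H^{-s}}^{2}+\|\nabla^{2}u\|_{\dot H^{-s}}^{2}\;-\;\|\nabla u\|_{\dot H^{-s}}^{2}=I_1+I_2,
\]
with the \emph{wrong sign} on $\|\nabla u\|_{\dot H^{-s}}^{2}$. Equivalently, if you keep $\Delta(u^{3}-2u)$ on the right and simply bound $\|u^{3}-2u\|_{\dot H^{2-s}}$, the $-2u$ contributes $2\|u\|_{\dot H^{2-s}}\lesssim\|\nabla u\|_{H^{1}}$, which is only \emph{linear} in $\|\nabla u\|_{H^{1}}$ and cannot be dominated by $\|\nabla u\|_{H^{1}}^{2}\|u\|_{\dot H^{-s}}$; nor can it be absorbed by the dissipation, since there is no smallness in front of it. This is exactly the well-known loss of coercivity in the Cahn--Hilliard operator $\Delta^{2}+\Delta$.

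The paper's proof avoids this by never separating the linear part: it keeps the full nonlinearity in the factored form $u^{3}-2u=u(u+\sqrt2)(u-\sqrt2)$, expands $\Delta$ by the product rule into six trilinear pieces, and then applies Lemma~\ref{lem2.3} term by term (cf.\ (\ref{5-4})). The point of the factorization is that every term is a genuine \emph{product of three factors}, so after H\"older each is cubic in (derivatives of) $u$, giving the $\|\nabla u\|_{H^{1}}^{3}\lesssim\delta\|\nabla u\|_{H^{1}}^{2}$ shape needed for (\ref{5-1}). Your fractional-Leibniz estimate of $\|u^{3}\|_{\dot H^{2-s}}$ and your H\"older split for the convective term are both fine in isolation; the failure is solely in the handling of the $-2u$ remainder, which the paper's argument is specifically built to neutralize.
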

\begin{proof}
Applying $\Lambda^{-s}$ to (\ref{3-0}), multiplying the resulting identities by $\Lambda^{-s}u$, and then integrating over $\mathbb{R}^3$ by parts, we deduce that
\begin{equation}
\label{5-2}
\begin{aligned}
&\frac12\frac d{dt}\|\Lambda^{-s}u\|_{L^2}^2+\|\Lambda^{-s}\nabla^2u\|_{L^2}^2+\|\Lambda^{-s}\nabla u\|_{L^2}^2
\\
=&\int_{\mathbb{R}^3}\Lambda^{-s}(u\cdot\nabla u)\cdot\Lambda^{-s}udx+\int_{\mathbb{R}^3}\Lambda^{-s}\Delta(u^3-2u)\cdot\Lambda^{-s}udx.
\end{aligned}
\end{equation}
For the first term of the right hand side of (\ref{5-2}), we have
\begin{equation}
\label{5-3}
\begin{aligned}
\int_{\mathbb{R}^3}\Lambda^{-s}(u\cdot\nabla u)\cdot\Lambda^{-s}udx
\leq&\|\Lambda^{-s}(u\cdot\nabla u)\|_{L^2}\|\Lambda^{-s}u\|_{L^2}
\\
\lesssim&\|u\cdot\nabla u\|_{L^{\frac1{\frac12+\frac s3}}}\|\Lambda^{-s}u\|_{L^2}
\\
\lesssim&\|u\|_{L^{\frac 3s}}\|\nabla u\|_{L^2}\|\Lambda^{-s}u\|_{L^2}
\\
\lesssim&\|\nabla u\|_{L^2}^{\frac 12+s}\|\nabla^2u\|_{L^2}^{\frac12-s}\|\nabla u\|_{L^2}\|\Lambda^{-s}u\|_{L^2}
\\
\lesssim&\|\Lambda^{-s}u\|_{L^2}(\|\nabla u\|_{L^2}^2+\|\nabla^2u\|_{L^2}^2)
.
\end{aligned}
\end{equation}
For the second term of the right hand side of  of (\ref{5-2}), we have
\begin{equation}
\begin{aligned}
\label{5-4}
&\int_{\mathbb{R}^3}\Lambda^{-s}\Delta(u^3-2u)\cdot\Lambda^{-s}udx
\\
=&\int_{\mathbb{R}^3}\Lambda^{-s}\Delta[u(u+\sqrt{2})(u-\sqrt{2})]\cdot\Lambda^{-s}udx
\\
\leq&\|\Lambda^{-s}u\|_{L^2}\|\lambda^{-s}\Delta[u(u+\sqrt{2})(u-\sqrt{2})]\|_{L^2}
\\
\lesssim&\|\Lambda^{-s}u\|_{L^2}\left[\|\Lambda^{-s}(u(u+\sqrt{2})\Lambda^2(u-\sqrt{2}))\|_{L^2}+\|\Lambda^{-s}(u(u-\sqrt{2})\Lambda^2(u+\sqrt{2}))\|_{L^2}
\right.\\
&+
\|\Lambda^{-s}((u-\sqrt{2})(u+\sqrt{2})\Lambda^2u)\|_{L^2}+\|\Lambda^{-s}( \nabla u\cdot\nabla(u-\sqrt{2})\cdot(u+\sqrt{2}))\|_{L^2}
\\
&\left.+
\|\Lambda^{-s}( \nabla u\cdot\nabla(u+\sqrt{2})\cdot(u-\sqrt{2}))\|_{L^2}+\|\Lambda^{-s}( \nabla (u+\sqrt{2})\cdot\nabla(u-\sqrt{2})\cdot u))\|_{L^2}\right]
\\
\lesssim&\|\Lambda^{-s}u\|_{L^2}\left[\| u(u+\sqrt{2})\Lambda^2(u-\sqrt{2}) \|_{L^{\frac1{\frac12+\frac s3}}}+\| u(u-\sqrt{2})\Lambda^2(u+\sqrt{2}) \|_{L^{\frac1{\frac12+\frac s3}}}
\right.\\
&+
\| (u-\sqrt{2})(u+\sqrt{2})\Lambda^2u \|_{L^{\frac1{\frac12+\frac s3}}}+\| |\nabla u||\nabla(u-\sqrt{2})||u+\sqrt{2}| \|_{L^{\frac1{\frac12+\frac s3}}}
\\
&\left.+
\| |\nabla u||\nabla(u+\sqrt{2})||u-\sqrt{2}|\|_{L^{\frac1{\frac12+\frac s3}}}+ |\nabla (u+\sqrt{2})||\nabla(u-\sqrt{2})||u | \|_{L^{\frac1{\frac12+\frac s3}}}\right]
\\
\lesssim&\|\Lambda^{-s}u\|_{L^2}(\|u\|_{L^{\infty}}\|u+\sqrt{2}\|_{L^{\frac3s}}\|\nabla^2(u-\sqrt{2})\|_{L^2}+
\|u\|_{L^{\infty}}\|u-\sqrt{2}\|_{L^{\frac3s}}\|\nabla^2(u+\sqrt{2})\|_{L^2}
\\
&+\|u-\sqrt{2}\|_{L^{\infty}}\|u+\sqrt{2}\|_{L^{\frac3s}}\|\nabla^2u\|_{L^2}+
\|\nabla u\|_{L^3}\|\nabla(u+\sqrt{2})\|_{L^6}\|u-\sqrt{2}\|_{L^{\frac3s}}\\
&+\|\nabla u\|_{L^3}\|\nabla(u-\sqrt{2})\|_{L^6}\|u+\sqrt{2}\|_{L^{\frac3s}}
+\|\nabla (u-\sqrt{2})\|_{L^3}\|\nabla(u+\sqrt{2})\|_{L^6}\|u \|_{L^{\frac3s}})
\\
\lesssim&\|\nabla u\|_{H^1}^3\|\Lambda^{-s}u\|_{L^2}\lesssim
 \delta\|\nabla u\|_{H^1}^2\|\Lambda^{-s}u\|_{L^2},
\end{aligned}
\end{equation}
where we have used
$$
\|v\|_{L^{\infty}}\lesssim \|\nabla v\|_{L^2}^{\frac12}\|\Delta v\|_{L^2}^{\frac12},
$$
$$
\|v\|_{L^3}\lesssim\|v\|_{L^2}^{\frac12}\|\nabla v\|_{L^2}^{\frac12},
$$
and
$$
\|v\|_{L^{\frac 3s}}\lesssim\|\nabla v\|_{L^2}^{\frac12+s}\|\Delta v\|_{L^2}^{\frac12-s}.
$$
Plugging the estimates (\ref{5-3}) and (\ref{5-4}) into (\ref{5-2}), we deduce (\ref{5-1}). Hence, the proof is complete.
\end{proof}

In the following, we devoted to establish the temporary decay rate of unique global solutions for Cauchy problem (\ref{3-0}). On the basis of the conclusions of Theorem \ref{thm1.1} and Lemma \ref{lem5.1}, we proceed to prove this result.
\begin{proof}[Proof of Theorem \ref{thm1.2}]
Define
$$
\mathcal{E}_{-s}(t):=\|\Lambda^{-s}u(t)\|_{L^2}^2.
$$
Then, integrating in time (\ref{5-1}) of Lemma \ref{lem5.1}, by the bound (\ref{bound}), we obtain that for $s\in[0,\frac12]$,
\begin{equation}
\label{6-4}
\begin{aligned}
\mathcal{E}_{-s}(t)\leq&\mathcal{E}_{-s}(0)+C\int_0^t\|\nabla u\|_{H^1}^2\sqrt{\mathcal{E}_{-s}(\tau)}d\tau
\\
\leq&C_0\left(1+\sup_{0\leq\tau\leq t}\sqrt{\mathcal{E}_{-s}(\tau)}d\tau\right),
\end{aligned}
\end{equation}
which implies  
\begin{equation}
\label{6-5}
\|\Lambda^{-s}u(t)\|_{L^2}^2\leq C_0,\quad\forall s\in[0,\frac12].
\end{equation}
Moreover, if  $k=1,2,\cdots,N-2$, we may use Lemma \ref{lem2.2} to have
$$
\|\nabla^{k+1}f\|_{L^2}\geq C\|\Lambda^{-k}f\|_{L^2}^{-\frac1{k+s}}\|\nabla^kf\|_{L^2}^{1+\frac1{k+s}}.
$$
Then, by this fact and (\ref{6-5}), we get
\begin{equation}\label{7-1}
\|\nabla^{k+1}u\|_{L^2}^2\geq C_0(\|\nabla^{k}u\|_{L^2}^2)^{1+\frac1{k+s}}.
\end{equation}
On the other hand, we may define a family of energy functions and the corresponding dissipation rates with minimum derivatives counts as
\begin{equation}
\label{6-6}
\mathcal{E}_k^{k+1}:=\sum_{l=k}^{k+1}\|\nabla^lu(t)\|_{L^2}^2,
\end{equation}
and
\begin{equation}
\label{6-7}
\mathcal{D}_k^{k+1}:=\sum_{l=k}^{k+1}(\|\nabla^l\nabla u\|_{L^2}^2+\|\nabla^{l+2}u\|_{L^2}^2).
\end{equation}
Taking into account Lemma \ref{lem3.1} and Theorem \ref{thm1.1}, we have that for $k=0,1,\cdots,N-2$ that
\begin{equation}
\label{6-8}
\frac d{dt}\mathcal{E}_k^{k+1}+\mathcal{D}_k^{k+1}\leq0.
\end{equation}
Note that
\begin{equation}
\label{6-9}
\mathcal{D}_k^{k+1}\geq\sum_{l=k}^{k+1}\|\nabla^{l+2}u\|_{L^2}^2.
\end{equation}
Combining (\ref{7-1}) and (\ref{6-9}) together gives
\begin{equation}
\label{6-10}
\mathcal{D}_k^{k+1}\gtrsim \left(\mathcal{E}_k^{k+1}\right)^{1+\frac1{k+s}}.
\end{equation}
From (\ref{6-8}) and (\ref{6-10}), we conclude that
\begin{equation}
\label{6-11}
\frac d{dt}\mathcal{E}_k^{k+1}+\left(\mathcal{E}_k^{k+1}\right)^{1+\frac1{k+s}}\leq 0,
\end{equation}
with $k=0,1,\cdots,N-2$. Solving (\ref{6-11})  directly gives
\begin{equation}
\label{6-11x}\mathcal{E}_k^{k+1}\leq C_0(1+t)^{-k-s},\quad\hbox{for}~k=1,2,\cdots,N-2.
\end{equation}
Note that the Hardy-Littlewood-Sobolev theorem implies that for $p\in(1,2]$,  $L^{p}(\mathbb{R}^3)\subset\dot{H}^{-s}(\mathbb{R}^3)$ with $s=3(\frac1p-\frac12)\in[0,\frac32)$. Therefore, based on   (\ref{6-11x}), we   obtain
\begin{equation*}\label{xx-1}\begin{aligned}&
\|\nabla^lu\|_{H^{N-l}} \leq C(1+t)^{-\left[\frac32\left(\frac1p -\frac12\right)+\frac k2\right]},\quad\hbox{for}~l=0,1,\cdots, N-1.\end{aligned}
\end{equation*}
Then, the inequality
(\ref{1-9}) holds and  we complete the proof of Theorem \ref{thm1.2}.

\section*{Acknowledgement}
This paper was supported by the Fundamental Research Funds for the Central Universities (grant No. N2005031).

\end{proof}

}
\end{document}